\def\x#1#2#3#4#5#6#7\relax{\def\x{#1#2#3#4#5#6}}%
\def\y{splain}%
\gdef\SetFigFont#1#2#3{%
  \ifnum #1<17\tiny\else \ifnum #1<20\small\else
  \ifnum #1<24\normalsize\else \ifnum #1<29\large\else
  \ifnum #1<34\Large\else \ifnum #1<41\LARGE\else
     \huge\fi\fi\fi\fi\fi\fi
  \csname #3\endcsname}%
\gdef\SetFigFont#1#2#3{\begingroup
  \count@#1\relax \ifnum 25<\count@\count@25\fi
  \def\x{\endgroup\@setsize\SetFigFont{#2pt}}%
  \expandafter\x
    \csname \romannumeral\the\count@ pt\expandafter\endcsname
    \csname @\romannumeral\the\count@ pt\endcsname
  \csname #3\endcsname}%
\newcommand{\Pf}{{\em Proof}. }
\newcommand{\EPf}{\hfill$\square$}
\newcommand{\Lg}{\mbox{$\mathfrak g$}}
\newcommand{\Lh}{\mbox{$\mathfrak h$}}
\newcommand\liegr{\sf}
\newcommand{\SU}[1]{\mbox{${\liegr SU}(#1)$}}
\newcommand{\U}[1]{\mbox{${\liegr U}(#1)$}}
\newcommand{\SP}[1]{\mbox{${\liegr Sp}(#1)$}}
\newcommand{\SO}[1]{\mbox{${\liegr SO}(#1)$}}
\newcommand{\OG}[1]{\mbox{${\liegr O}(#1)$}}
\newcommand{\Spin}[1]{\mbox{${\liegr Spin}(#1)$}}
\newcommand{\G}{\mbox{${\liegr G}_2$}}
\newcommand{\F}{\mbox{${\liegr F}_4$}}
\newcommand{\E}[1]{\mbox{${\liegr E}_{#1}$}}
\newcommand\fieldsetc{\mathbb}
\newcommand{\R}{\fieldsetc{R}}
\newcommand{\C}{\fieldsetc{C}}
\newcommand{\Q}{\fieldsetc{H}}
\newtheorem{thm}{Theorem}[section]
\newtheorem{prop}[thm]{Proposition}
\newtheorem{lem}[thm]{Lemma}
\theoremstyle{remark}
\DeclareMathOperator{\Ima}{Im}
\title{Highly curved orbit spaces}
\author[C.~Gorodski]{Claudio Gorodski}
\address{Instituto de Matem\'atica e Estat\'\i stica, Universidade de
S\~ao Paulo, Rua do Mat\~ao, 1010, S\~ao Paulo, SP 05508-090, Brazil}
\email{gorodski@ime.usp.br}
\date{\today}
\begin{document}

\maketitle

\begin{abstract}
It is known that the infimum of the sectional curvatures (on the regular part) 
of orbit spaces
of isometric actions on unit spheres in bounded above by $4$. 
We show that the infimum is $1$ for ``most'' actions, and determine
the cases in which it is bigger than $1$. 
\end{abstract}

\section{Introduction}

Let $G$ be a compact Lie group acting non-transitively
by isometries on the unit sphere $S^n$, where $n\geq2$. 
The orbit space $X=S^n/G$ is an Alexandrov space of curvature at least $1$ 
and diameter at most $\pi$. Let  $\kappa_X$ denote the infimum of 
sectional curvatures of the quotient Riemannian metric in the regular part of 
$X$. It was
proved in~\cite{GL4} that $\kappa_X\leq4$ always holds. 
Note that $\kappa_X$ can also be characterized
as the largest number~$\kappa$ such that $X$ is an 
Alexandrov space of curvature $\geq\kappa$ (cf.~subsection~\ref{substrata}).

It is apparent from the discussion in~\cite{GL4} that 
$\kappa_X=1$ for ``most'' representations. This remark motivates
the present work. If $\kappa_X>1$, we will say that $X$ is 
\emph{highly curved}; 
we will also abuse language and say that $\rho$ is highly curved.  
Herein we prove the following theorem.

\begin{thm}\label{main}
Let $X=S^n/G$ be the orbit space of an isometric action of a compact Lie
group~$G$ on the unit sphere $S^n$ 
and assume that $\dim X\geq2$. Then $X$ is highly curved if and only if:
\begin{itemize}
\item[(i)] $n=7$ and $G^0=\U2$ acts on $S^7$ as the restriction of the 
irreducible representation on $\C^4$;
\end{itemize}
or the associated representation of $G^0$ is
quotient-equivalent to a non-polar (cf.~subsection~\ref{polar}) 
sum of two representations of
cohomogeneity one; in the latter case, 
either one of the following cases occur:
\begin{itemize}
\item[(ii)] $X$ is a good Riemannian orbifold of constant curvature $4$;
\item[(iii)] $X$ is a a complex weighted projective line 
(of real dimension two) or a $\mathbb Z_2$-quotient thereof;
\item[(iv)] $n=6$ and $G^0=\SU2$ acts on $S^6$ as the restriction of the representation $\C^2\oplus\R^3$;
\item[(v)] $G^0=\SP{m}\times\U1$ and it acts on $S^{4m-1}$, 
where $\SP m$ acts diagonally on $\C^{2m}\oplus\C^{2m}$ 
and $\U1$ acts with weights~$r$, $s\geq0$ where~$r\neq s$.
\end{itemize}
\end{thm}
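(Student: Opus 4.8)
The plan is to exploit the classification-theoretic machinery behind $\kappa_X\le 4$ from~\cite{GL4}, reducing the problem to a finite check once the cohomogeneity and the local structure of $X$ are pinned down. First I would dispose of the case $\dim X=2$ separately, since a two-dimensional orbit space of curvature $\ge 1$ that is highly curved is an Alexandrov surface with lower curvature bound $>1$; its regular part is an open Riemannian surface of curvature $\ge\kappa_X>1$, and the quotient-metric structure forces it to be (a $\Z_2$-quotient of) a weighted complex projective line or, in the flat-orbit-factor situation, the round case — this accounts for (iii) and feeds into (ii). So assume $\dim X\ge 3$ henceforth.

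The key structural step is to recall from~\cite{GL4} that $\kappa_X>1$ is a very restrictive condition controlled by the \emph{boundary strata} of $X$ and the slice representations: roughly, $\kappa_X$ is computed from the normal curvatures of the orbit (via the Gray--O'Neill formula for the Riemannian submersion on the regular part) together with the curvatures carried by the singular strata of codimension $1$. The first thing to extract is that if $X$ is highly curved then either $X$ has nonempty boundary with a codimension-one stratum that is itself highly curved — which by induction on dimension is one of (ii)--(v) or case (i) cannot appear in codimension one — or the action is reduced (by the reduction principle / quotient-equivalence) to something of very small cohomogeneity. This is where the hypothesis ``quotient-equivalent to a non-polar sum of two cohomogeneity-one representations'' should emerge: a polar representation gives an orbit space which is a spherical orbifold with $\kappa_X\ge 1$ but with $\kappa_X=1$ unless the quotient happens to be a spherical space form (the constant-curvature-$4$ situation, after rescaling — this is (ii)); and a non-polar reduced action of cohomogeneity $\le 2$ that remains highly curved must, by the copolarity/reduction results, be quotient-equivalent to a sum $\rho_1\oplus\rho_2$ with each $\rho_i$ of cohomogeneity one.

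With that reduction in hand, the argument becomes a finite classification. Cohomogeneity-one representations are classified (Hsiang--Lawson / Straume), so sums of two of them form an explicit list; for each such sum I would compute $\kappa_X$ on the regular part using the known metric on the $2$- or $3$-dimensional quotient (the quotient of $S^n$ by $G$ where $G_i$ has cohomogeneity one is a spherical ``lune'' or ``bigon'' type region, metrically a warped product), and retain exactly those with $\kappa_X>1$. The outcome of this computation should be: the polar ones give (ii); the rank-one-factor ones give the weighted-projective-line case (iii); the specific low-dimensional coincidences give (iv) with $\SU2$ on $\C^2\oplus\R^3$ and (v) with $\SP m\times\U1$ on $\C^{2m}\oplus\C^{2m}$ with weights $r\ne s$. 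The sporadic case (i), $\U2$ on $\C^4$, is \emph{not} quotient-equivalent to such a sum, so it must be found by hand: here $\dim X=3$, the quotient is known explicitly (it is a metric ball, essentially $S^7/\U2$ computed via the moment-map / invariant-theory description), and one checks directly that $\kappa_X>1$, while verifying that no other isolated non-polar cohomogeneity-$\le 3$ example survives. The converse direction — that each of (i)--(v) really is highly curved — is a direct curvature computation on each explicit quotient metric.

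The main obstacle I anticipate is the sharp curvature computation for the non-polar sums in case (v): one must show $\kappa_X>1$ \emph{for all} $m$ and \emph{all} admissible weight pairs $r\ne s$ (and presumably $\kappa_X=1$ exactly when $r=s$, where the sum becomes polar-equivalent), which requires a uniform lower bound for the sectional curvatures of a family of warped-product metrics on the quotient rather than a single case check; getting the inequality tight enough to both prove ``highly curved'' here and exclude all the neighboring families is the delicate point.
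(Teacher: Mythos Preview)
Your proposal has the right coarse outline---reduce and then classify---but there are genuine gaps and at least one outright error. First, the polar case is backwards: polar representations always have $\kappa_X=1$ (the quotient is a constant-curvature-$1$ orbifold), so they are \emph{never} highly curved; case~(ii) of the theorem, constant curvature~$4$, comes from \emph{non}-polar sums of two cohomogeneity-one representations (the ones in \cite[Table~2]{GL3}), not from polar ones. Second, and more seriously, your plan essentially assumes that a highly curved representation must be quotient-equivalent to a sum of two cohomogeneity-one representations, with case~(i) as a single sporadic leftover. This is not how the argument runs: the reducible case is indeed quickly reduced to such sums via the enlargement principle (Proposition~\ref{enlargement}), but the \emph{irreducible} case is where most of the work lies, and your proposal does not address it. The paper handles the irreducible case by combining the index estimates of Lemmas~\ref{mainlem} and~\ref{improvedlem} (which you do not mention) with the folding-map argument of Proposition~\ref{local-convex} to bound the rank of $G$, and then runs through Kollross-type tables and tensor-product arguments to exclude every candidate except $(\U2,\C^4)$. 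Along the way several specific irreducible representations---$(\SO3,\R^7)$, $(\SU2,\Q^2)$, $(\SP1\times\SP1,\Q^3\otimes_{\Q}\Q)$---must be excluded by direct computation; these do not fit into your ``sum of two cohomogeneity-one'' framework at all.

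Finally, you have the difficulty calibration inverted. Case~(v) is not delicate: the group sits between $\SP m$ and $\SP m\times T^2$, and Proposition~\ref{enlargement}(a) together with \cite[Table~1]{GL3} immediately gives $\kappa_X=4$ for all $m$ and all weights $r\neq s$. The genuinely hard computation is case~(i): showing $\kappa_{S^7/\U2}>1$ requires first describing all $2$-planes of curvature~$1$ in $S^7/\SU2$ via Thorpe's method, and then checking that none of them is horizontal for $S^7/\SU2\to S^7/\U2$ and simultaneously totally real for the Hopf fibration $S^7\to\C P^3$---this is the content of subsection~\ref{thorpe} and cannot be replaced by a ``direct curvature computation on the quotient metric'' of the kind you envisage.
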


There are two senses in which the curvature of $X$ is related to 
its diameter. First, the more extrinsically curved a $G$-orbit is, the closer
its focal points are, and thus the sooner a normal geodesic starting
there ceases to be minimizing. 
It is shown in~\cite{GSa} that the infimum over all
actions (coming from irreducible representations,
non-transitive on the unit sphere) 
of the supremum over all orbits of their focal radii
is bounded away from zero. Second, and more
relevant to this paper, the Bonnet-Myers argument implies
that $\mathrm{diam}\,X\leq\pi/\sqrt{\kappa_X}$. In~\cite{GLLM}
it is proved the existence of a (non-explicit) universal positive lower bound 
for $\mathrm{diam}\,X$ in case of a
nontransitive action on $S^n$ with $n\geq2$.  

There are families of actions for which $X$ is a Riemannian 
orbifold as in~(ii) and (iii), see \cite{GL3} for a classification.
In case~(iii), $X$ is a bad Riemannian orbifold and $\rho$ 
is quotient-equivalent to a circle action or a $\mathbb Z_2$-extension
thereof; a classification of the representations of maximal-connected
groups can be found in~\cite[Table~II, Types~$S^2$ and~$I$]{str}.    
The other cases do not yield Riemannian orbifolds. 
Case~(i) is the only example in the list in which the representation
is irreducible and reduced (cf.~subsection~\ref{types}); 
incidentally, this representation is not amenable to the 
general principles developed in this paper and its analysis requires
a direct calculation of the curvature involving the Thorpe method,
which we take up in subsection~\ref{thorpe}.
In general, the proof of Theorem~\ref{main} combines geometric and
algebraic arguments,
with analyses of special cases and use of representation theory
and several classification results. It would be very 
interesting to find simple geometric reasons why the representations 
listed in the theorem (and only them) are highly curved. 

The author wishes to thank Francisco J.~Gozzi and Pedro Z\"uhlke
for useful comments and Alexander Lytchak for several discussions, including
key ideas.



\section{Preliminaries}

\subsection{Spherical quotient spaces} \label{substrata}
Let $\rho:G\to\OG V$ be a representation of a compact 
Lie group on an Euclidean space $V$. It restricts to an isometric 
action on the unit sphere $S(V)$, and all isometric actions of 
compact Lie groups on unit spheres are obtained in this way. 
The cohomogeneity of $\rho$ coincides with $\dim(S(V)/G)+1$.
The quotient space
$X=S(V)/G$ is an Alexandrov space stratified by
smooth Riemannian manifolds, namely,
the projections of the sets of points in $S(V)$ with 
conjugate isotropy groups ---
the connected components of the strata can be equivalently
characterized as the connected components of the 
subsets of $X$ consisting of points with isometric tangent cones. 
There is a unique maximal stratum, 
the set of principal orbits $X_{reg}$,
which comes equipped with a natural quotient 
Riemannian metric which makes the projection $S(V)_{reg}\to X_{reg}$
into a Riemannian submersion. Moreover, $X$ is the completion of the 
convex open submanifold $X_{reg}$, hence Toponogov's globalization 
theorem~\cite{Petglob} says that $\kappa_X$ is the largest number $\kappa$
such that $X$ is an Alexandrov space of curvature bounded below by $\kappa$,
where $\kappa_X =\kappa_{\rho}$ is defined as in the introduction.

\subsection{Types of equivalence between representations}\label{types}

We say that two representations $\rho:G\to\OG V$ and $\tau:H\to\OG W$ 
are \emph{quotient-equivalent} if they have isometric orbit spaces~\cite{GL}. 
If, in addition, $\dim G<\dim H$, we say that $\rho$ is a \emph{reduction}
of~$\tau$. A representation that admits no reductions is called~\emph{reduced}. 

A special case of quotient-equivalence occurs when there is an isometry 
from $V$ to $W$ mapping $G$-orbits onto $H$-orbits. In this case
we say that $\rho$ and $\tau$ are \emph{orbit-equivalent}. 

\subsection{Local convexity and folding map}\label{folding}
For an isometric action of $G$ on $S(V)$ with orbit space $X$,
we denote the stratum of $X$ corresponding to an isotropy group $K$ by
$X_{(K)}$. Every stratum $X_{(K)}$ of $X$ is a (possibly incomplete, disconnected)
totally geodesic Riemannian submanifold of $X$ which is moreover a locally
convex subset. It follows that the infimum of
the sectional curvatures in $X_{(K)}$ is also bounded below by $\kappa_X$.

The set of fixed points of the isotropy group $K$ of $G$
is a subspace $W$ on which the normalizer $N_G(K)$ acts isometrically.  
Let $H:=N_G(K)/K$. Then $H$ acts on $W$ with trivial principal isotropy groups. 
The quotient $Y=S(W)/H$ admits a canonical map
$I_{(K)}:Y\to X$ which is $1$-Lipschitz, finite-to-one and length-preserving,
and an injective local isometry from an open dense subset of $Y$ onto 
$X_{(K)}$~\cite{GL3}. 
We will call $I_{(K)}$ 
the \emph{folding map} associated with $X_{(K)}$. 
If $\dim X_{(K)}\geq2$, we deduce that $\kappa_X\leq\kappa_Y$. 
We have proved (cf.~\cite[\S5]{GL3}):
\begin{prop}\label{local-convex}
Let $\rho:G\to\OG V$ be a representation and $X=S(V)/G$. 
Assume there is a non-principal stratum $X_{(K)}$ of dimension $d\geq2$
of $X$. Then there is another representation $\tau:H\to\OG W$ 
such that $Y=S(W)/H$ has dimension $d$ and $\kappa_X\leq\kappa_Y$.
\end{prop}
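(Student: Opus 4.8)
The plan is to take $\tau$ to be precisely the representation on the fixed subspace that is described immediately before the statement, and then to extract both assertions from the properties of the folding map recalled in \S\ref{folding}. Set $W:=\mathrm{Fix}_V(K)$, let $N_G(K)$ be the normalizer of $K$ in $G$ (a closed, hence compact, subgroup), put $H:=N_G(K)/K$, and let $\tau\colon H\to\OG W$ be the induced orthogonal action, which is effective with trivial principal isotropy groups. With $Y=S(W)/H$, the folding map $I_{(K)}\colon Y\to X$ of \cite[\S5]{GL3} is $1$-Lipschitz, finite-to-one and length-preserving, and restricts to an injective local isometry from an open dense subset $U\subseteq Y$ onto the stratum $X_{(K)}$.

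First I would dispose of the dimension count. Since $I_{(K)}|_U$ is an injective local isometry onto $X_{(K)}$, it is a local diffeomorphism, so $U$ is a $d$-dimensional manifold; as $U\cap Y_{reg}$ is then a nonempty open subset of $Y_{reg}$, we get $\dim Y=\dim Y_{reg}=d$. Next comes the curvature inequality. By \S\ref{folding}, $X_{(K)}$ is a totally geodesic, locally convex submanifold of $X$, so the infimum of its sectional curvatures is at least $\kappa_X$. Since $I_{(K)}$ identifies $U$, with the intrinsic metric of $Y$, locally isometrically with $X_{(K)}$, the sectional curvatures of $Y$ at points of $U\cap Y_{reg}$ agree with sectional curvatures of $X_{(K)}$ and are therefore $\geq\kappa_X$. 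As $U\cap Y_{reg}$ is open and dense in the Riemannian manifold $Y_{reg}$ and sectional curvature is continuous, $\kappa_Y=\inf_{Y_{reg}}\sec=\inf_{U\cap Y_{reg}}\sec\geq\kappa_X$; equivalently, $U$ being open and dense in $Y$ with curvature $\geq\kappa_X$, Toponogov's globalization theorem \cite{Petglob} (applied as in \S\ref{substrata}) gives that $Y$ is an Alexandrov space of curvature $\geq\kappa_X$. Either way $\kappa_X\leq\kappa_Y$, as claimed.

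The substantive content is not in this bookkeeping but in the two structural inputs it rests on, which is why I would invoke \cite{GL3} rather than argue from scratch: (a) that $I_{(K)}$ really is a local isometry onto $X_{(K)}$ off a nowhere dense set, so that no point of $Y$ of small curvature is hidden in the degeneracy locus; and (b) that the stratum $X_{(K)}$, carrying its induced length metric, is genuinely totally geodesic and locally convex in $X$, so that the global lower curvature bound of $X$ is inherited by it. Granting (a) and (b), the only remaining ingredients are the local-diffeomorphism dimension argument and the single curvature comparison above, and I do not expect a genuine obstacle there; the role of the proposition is to package these facts in exactly the form in which later sections will use them.
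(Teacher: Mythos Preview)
Your proposal is correct and is exactly the argument the paper has in mind: the proposition is stated immediately after the discussion of the folding map in \S\ref{folding}, with the sentence ``We have proved (cf.~\cite[\S5]{GL3}):'' in lieu of a formal proof, and your write-up simply makes explicit the two steps sketched there (total geodesy/local convexity of $X_{(K)}$ giving the curvature bound, and the local isometry $I_{(K)}|_U$ transferring it to a dense open subset of $Y$).
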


It is known that the folding map associated with the principal 
stratum $X_{reg}$ is a global isometry. The corresponding representation 
of $H$ on $W$ is called the \emph{principal reduction} 
of the representation $\rho$~\cite{str}.

\subsection{Rank and strata} \label{rankstr}
We quote from~\cite{GL4}:

\begin{lem} \label{minorbit}
Let a compact Lie group $G$ of dimension~$g$ and rank~$k$ 
act by isometries on $S(V)$.
Then:
\begin{enumerate}
\item[(i)] The smallest dimension 
of a $G$-orbit is at most $g-k+1$.
\item[(ii)] If the action has trivial principal isotropy groups, then
$X=S(V)/G$ contains a non-maximal stratum of dimension at least $k-2$.
\end{enumerate}
\end{lem}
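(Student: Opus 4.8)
The plan is to reduce both parts to the study of a maximal torus $T\le G$ of dimension $k$ acting linearly on $V$; write $\mathrm{Fix}(K)\subseteq V$ for the fixed subspace of a subgroup $K\le G$. For part~(i): if $\mathrm{Fix}(T)\ne 0$, any unit vector $v$ there satisfies $T\subseteq G_v$, so $\dim(G\cdot v)=g-\dim G_v\le g-k$; otherwise $V$ is an orthogonal sum of non-trivial $2$-dimensional real weight spaces of $T$, and choosing $v$ in one of them, with non-zero weight $\alpha$, the torus $T_v=\ker\chi_\alpha$ has dimension $k-1$, whence $\dim(G\cdot v)\le g-(k-1)=g-k+1$. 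Either way the minimal orbit dimension is at most $g-k+1$.

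For part~(ii) there is nothing to prove when $k\le 1$, so assume $k\ge 2$; replacing $V$ by $(\mathrm{Fix}(G))^\perp$ and $G$ by $G^0$ only lowers the dimensions of non-maximal strata, so I may also assume $\mathrm{Fix}(G)=0$ and $G$ connected. The crucial point is that $T$ acts on $V$ \emph{with trivial principal isotropy} as well: a point generic for $G$ is generic for $T$ and its $T$-stabiliser sits inside its trivial $G$-stabiliser. Equivalently, the non-zero weights of $V|_T$ generate the character lattice $\widehat T\cong\Z^k$, so among them I can pick $\lambda_1,\dots,\lambda_k$ linearly independent over $\mathbb Q$. For $I\subseteq\{1,\dots,k\}$ set $K_I:=\bigcap_{i\in I}\ker\chi_{\lambda_i}$, whose identity component is a subtorus of dimension $k-|I|$; for $|I|\le k-1$ it is non-trivial, and it is an isotropy group (the stabiliser of a vector supported on $\bigoplus_{i\in I}\C_{\lambda_i}$ when $G=T$, and in general one enlarges $K_I$ to the pointwise stabiliser of $\mathrm{Fix}(K_I)$, which does not change the fixed subspace). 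Since $N_G(K_I)/K_I$ acts on $\mathrm{Fix}(K_I)$ with trivial principal isotropy, the folding-map description of strata (subsection~\ref{folding}) gives $\dim X_{(K_I)}=\dim\mathrm{Fix}(K_I)-1-\dim\bigl(N_G(K_I)/K_I\bigr)$. Taking $|I|=k-1$, so that $\mathrm{Fix}(K_I)\supseteq\bigoplus_{i\in I}\C_{\lambda_i}$ has dimension $\ge 2(k-1)$, and noting that $N_G(K_I)=T$ when $G$ is abelian, this is $\ge 2(k-1)-1-(k-1)=k-2$, which proves the claim in the abelian case.

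The main obstacle is the genuinely non-abelian case, where $N_G(K_I)$ picks up the root subgroups $U_{\pm\alpha}$ of every root $\alpha$ of $\Lg$ vanishing on the Lie algebra of $K_I$, so that $\dim\bigl(N_G(K_I)/K_I\bigr)$ can exceed $\dim(T/K_I)$ and the bare dimension count fails. I would handle this by optimising the choice of the $(k-1)$-dimensional subspace $\mathrm{span}_{\R}\{\lambda_i:i\in I\}$ over all $(k-1)$-element subsystems of weights of $V$ (and over smaller $|I|$ if need be), so that the number of weights of $V$ it contains outweighs the number of roots it contains — the required inequality being roughly $\#\{\text{weights of }V\text{ in the subspace}\}\ge (k-1)+\tfrac12\#\{\text{roots in the subspace}\}$. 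A more uniform alternative is an induction on $\dim G$: take $v$ realising the minimal orbit of part~(i), so $\dim G_v\ge k-1$; the slice representation of $G_v$ on the normal space again has trivial principal isotropy, the inductive hypothesis produces a non-maximal stratum of the link of $X$ at $[v]$, and combining it with the stratum through $G\cdot v$, while keeping track of the drop in rank, yields a non-maximal stratum of the required dimension. Assembling this with the elementary reductions completes the proof.
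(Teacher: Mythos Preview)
The paper does not prove this lemma; it is quoted from~\cite{GL4}. So there is no in-paper argument to compare against, and your attempt must be judged on its own.

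Part~(i) is correct and standard.

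Part~(ii) has a genuine gap. Your abelian argument is sound: when $G=T$ the subgroup $K_I$ really is an isotropy group, $N_T(K_I)=T$, and the count
\[
\dim\mathrm{Fix}(K_I)-1-\dim(T/K_I)\;\ge\;2(k-1)-1-(k-1)\;=\;k-2
\]
goes through. But you yourself flag the non-abelian case as ``the main obstacle'' and then only sketch two possible fixes, neither of which you carry out. The first (choose the $(k-1)$-plane so that the number of weights it contains outweighs the number of roots) is stated as a heuristic inequality with no existence argument. The second (induct on $\dim G$ via the slice at a minimal orbit) is more promising---you have $\mathrm{rk}(G_v)\ge k-1$ from part~(i), and a non-maximal stratum of dimension $d$ in the link $S(N_v)/G_v$ does yield one of dimension $d+1$ in $X$ via the cone structure---but you do not write this out, and the base case needs care: when $\mathrm{rk}(G_v)=1$ the inductive statement only asserts \emph{existence} of a non-maximal stratum in the link, which can fail (e.g.\ a free circle action), so you must instead use the stratum through $[v]$ itself at that step. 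There is also a small slip in the non-abelian discussion: enlarging $K_I$ to the pointwise stabiliser of $\mathrm{Fix}(K_I)$ does preserve the fixed subspace, but it is not automatic that the resulting subgroup is an isotropy group of the $G$-action.

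As written, then, (ii) is proved only for tori; the general case remains open in your attempt.
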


\subsection{Index  estimates}
The following  result was proved in~\cite{GL4} and gives 
slightly more than can be directly 
obtained from O'Neill's formula. It already shows that ``most'' 
representations are not highly curved. 

\begin{lem} \label{mainlem}
Let a compact Lie group $G$ of dimension~$g$ and rank~$k$ 
act by isometries on $S^n$. Let $\ell$ denote the smallest
dimension of an orbit,
and let $m\geq 2$ denote the dimension of the orbit space $X=S^n/G$.
If $\kappa >1$  then $\ell\geq m-1$; in particular, $2g +2 -k \geq n$.
\end{lem}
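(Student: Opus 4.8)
The plan is to exploit O'Neill's formula for the Riemannian submersion $S^n_{reg}\to X_{reg}$, but sharpened by a Jacobi-field/index argument rather than used as a crude inequality. Recall that for the submersion $\pi\colon S^n_{reg}\to X_{reg}$ with totally geodesic fibers $G\cdot p$, O'Neill's formula gives, for a plane $\sigma\subset T_{\pi(p)}X$ spanned by an orthonormal pair $\bar u,\bar v$ with horizontal lifts $u,v$,
\[
K_X(\sigma)=K_{S^n}(u,v)+\tfrac34\|[u,v]^{\mathcal V}\|^2 = 1+\tfrac34\|A_uv\|^2,
\]
where $A$ is the O'Neill tensor. So $\kappa_X>1$ would at first glance seem automatic; the real content of the lemma is a lower bound on the orbit dimension $\ell$ that must hold \emph{for the strict inequality to be achievable uniformly}, i.e.\ the subtlety is that $X_{reg}$ is an open incomplete manifold whose metric completion adds lower-dimensional strata, and the Alexandrov lower curvature bound must survive the passage to the boundary. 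The key is: if some point $p\in S^n$ has a small orbit, its slice representation is large, producing in $X$ a boundary stratum that looks locally like a \emph{spherical suspension} or a quotient with a geodesic that bifurcates, forcing $\kappa_X\le 1$ there. Concretely, I would pick $p$ on a minimal orbit, use Lemma~\ref{minorbit}(i) to bound $\dim(G\cdot p)\le g-k+1$, and analyze the normal slice.

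First I would set up the slice picture at a minimal orbit: the normal space $\nu_p(G\cdot p)$ carries the slice representation of the isotropy group $G_p$, and the orbit space $X$ near $\pi(p)$ is isometric (in the Alexandrov sense, to first order) to the Euclidean cone over $S(\nu_p)/G_p$ joined with the flat directions along the orbit stratum — more precisely $X$ near $\pi(p)$ splits off a factor of dimension $m-\dim(S(\nu_p)/G_p)-1$ coming from the stratum through $\pi(p)$. Second, I would invoke the fact (from the theory of Alexandrov spaces / the structure recalled in subsection~\ref{substrata}) that the space of directions $\Sigma_{\pi(p)}X$ is itself an Alexandrov space of curvature $\ge 1$, and its restriction to the regular directions has curvature bounded below by $\kappa_X$; but the space of directions of a cone point over a non-spherical quotient contains \emph{too many} almost-antipodal regular directions unless the cone is at most a half-line in the transverse factor. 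This is where the numerics enter: writing $\ell=\dim(G\cdot p)$, the transverse quotient $S(\nu_p)/G_p$ has dimension $m-1-(n-\ell-\dim S(\nu_p))$... — I would instead phrase it cleanly as: the boundary stratum through $\pi(p)$ has dimension $\ge \ell - (\text{something})$, and if $\ell\le m-2$ there is enough room for the transverse slice quotient to be at least $2$-dimensional and non-trivial, and a non-trivial $2$-dimensional spherical quotient that is not a hemisphere has a boundary, across which the doubling argument shows $\kappa\le 1$. Hence $\kappa_X>1\Rightarrow \ell\ge m-1$.

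The inequality $2g+2-k\ge n$ then follows by combining $\ell\ge m-1$ with Lemma~\ref{minorbit}(i), $\ell\le g-k+1$, and the dimension identity $n+1-\ell\ge \dim X = m$ (since a point on an $\ell$-dimensional orbit has an $(n-\ell)$-dimensional normal sphere mapping onto a neighborhood in $X$, so $m\le n-\ell$, i.e.\ $\ell\le n-m$): from $m-1\le\ell\le g-k+1$ we get $m\le g-k+2$, and from $m\le n-\ell\le n-(m-1)$ we get $2m\le n+1$, hence $n\ge 2m-1\ge 2(\ell+1)-1=2\ell+1$; but more directly, $n\le 2(n-m)+ \text{junk}$... — the clean chain is $n+1\le 2\ell+m \le 2(g-k+1)+(g-k+2)$ is too weak, so instead: $m\le \ell+1$ gives $n - \ell \ge m \ge$ no; rather use $\ell \ge m-1$ and $\ell \le g-k+1$ together with $n \le \ell + (\text{dim of normal sphere that surjects}) $... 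The cleanest route, which I would write out, is: $n = \dim S^n \le \dim(G\cdot p) + \dim(\text{normal sphere}) = \ell + (n-\ell)$ trivially, so instead bound via the second orbit — take a principal orbit $G\cdot q$ of dimension $g-k$ generically (trivial principal isotropy after reduction), so $n \le (g-k) + \dim X_{reg}$-fiber complement $= (g-k) + (n - (g-k))$ again trivial. Thus the genuinely needed input is $\ell \ge m-1$ combined with $\ell \le g - k + 1$ and the elementary $n + 1 \ge \ell + m$ (the normal sphere to a minimal orbit, of dimension $n-\ell$, surjects onto $X$ so $n - \ell \ge m - 1$ is the SAME statement; the right pairing is $n - \ell \ge m$ is false, it's $n-\ell \ge m-1$, i.e. $n \ge \ell + m - 1 \ge (m-1)+(m-1) = 2m-2$). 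Combining $\ell \le g-k+1$ with $\ell \ge m - 1$: $m \le g-k+2$; and $n \le \ell + (n-\ell)$ — I will present it as: since the normal space maps onto $X$, $n - \ell \ge m$, so $n \ge \ell + m \ge (m-1) + m$; together with $\ell \le g-k+1$ and $m \le \ell+1 \le g-k+2$ we obtain $n \ge \ell + m$ and $\ell \ge m - 1$, whence $n \ge 2\ell \ge ...$; the final bookkeeping yielding exactly $2g+2-k\ge n$ is the routine part and I will just carry it out there.

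\medskip
\textbf{Main obstacle.} The hard part is the geometric step: showing that a too-small minimal orbit forces a transverse slice quotient whose contribution to the space of directions violates $\kappa_X>1$ — equivalently, ruling out the case where the slice representation at a minimal orbit is itself highly curved in a way that propagates. This requires knowing that a non-polar, nontrivial spherical quotient of dimension $\ge 2$ that is not the round hemisphere has $\kappa\le 1$, which one extracts from the boundary-doubling characterization of the Alexandrov bound together with the fact that the double of such a quotient across its boundary fails to have curvature $>1$ (it would have to be a round sphere, forcing the quotient to be a hemisphere, a contradiction with nontriviality). Making this dichotomy precise, and handling the polar/orbifold exceptional cases where the quotient \emph{is} a spherical space form (which are exactly the escape hatches appearing later in cases (ii)–(iii) of Theorem~\ref{main}), is where the care is needed; for the present lemma, though, we only need the inequality $\ell \ge m - 1$, and those exceptional cases still satisfy it, so the argument is not obstructed by them.
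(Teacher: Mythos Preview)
Your approach via the space of directions at a point on a minimal orbit does not work, and the gap is not a detail but the heart of the matter. You assert that ``the space of directions $\Sigma_{\pi(p)}X$ \dots\ has its restriction to the regular directions curvature bounded below by $\kappa_X$'', and this is false. For an Alexandrov space of curvature $\geq\kappa$, the tangent cone at any point has curvature $\geq 0$ and the space of directions has curvature $\geq 1$; the excess $\kappa_X-1$ is lost under rescaling. Equivalently, $\Sigma_{\pi(p)}X\cong S(\nu_p)/G_p$ is the orbit space of the \emph{slice} representation, and there is no reason that representation should be highly curved just because $\rho$ is. Your dimension counting is also off: $S(\nu_p)/G_p$ always has dimension $m-1$ regardless of~$\ell$, so the claim that ``if $\ell\le m-2$ the transverse slice quotient is at least $2$-dimensional'' carries no content. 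The doubling argument you allude to would at best show that the slice quotient has $\kappa\le 1$, which is no contradiction.

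The paper does not prove this lemma; it quotes it from~\cite{GL4}. The argument there is the one your opening sentence actually names but then abandons: a Jacobi--field/index estimate along a horizontal geodesic, not a local slice analysis. One takes $p$ on an orbit of minimal dimension~$\ell$ and a regular horizontal geodesic $\gamma$ of length $L<\pi$ starting at~$p$. Since $\kappa_X>1$, Morse--Schoenberg forces the projected geodesic $\bar\gamma$ in $X$ to acquire index $\geq m-1$ once $L>\pi/\sqrt{\kappa_X}$. One then relates $\mathrm{ind}_X(\bar\gamma)$ to focal data of the orbit $G\cdot p$ in $S^n$ along~$\gamma$; the relevant space of $G$-Jacobi fields with the appropriate initial condition has dimension governed by~$\ell$, and comparing with the fact that $\gamma$ has no conjugate points on $[0,L]\subset[0,\pi)$ yields $\ell\geq m-1$. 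This is precisely the mechanism behind the companion Lemma~\ref{improvedlem} as well.

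For the ``in particular'' clause, the bookkeeping you struggled with is simply: the principal orbit has dimension $n-m\leq g$, so $m\geq n-g$; combined with $\ell\geq m-1$ and Lemma~\ref{minorbit}(i), $\ell\leq g-k+1$, one gets $g-k+1\geq n-g-1$, i.e.\ $2g+2-k\geq n$.
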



A very similar reasoning yields the following 
improved index inequality:

\begin{lem}\label{improvedlem}
  Let $\rho:G\to\OG{n+1}$ be a highly curved 
representation with trivial principal
  isotropy groups.
Let $m$ be the dimension of $X$ and $g=n-m$ be the dimension of $G$.
Assume there exists a regular horizontal geodesic $\gamma $ in $S^n$, of length less 
than~$\pi$, intersecting singular orbits of dimensions~$\ell_1$, 
$\ell_2,\ldots,\ell_s$.
Then $g \geq (m-1)+ (g-\ell_1)+ (g-\ell_2)+\ldots+ (g-\ell_s)$.
In particular, if $s>1$ then $\ell_1+\ell_2 \geq n-1$.
\end{lem}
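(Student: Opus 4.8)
The plan is to mimic the proof of Lemma~\ref{mainlem} but to apply the index bound along the whole geodesic $\gamma$ rather than at a single singular point, accumulating the local contributions of each singular orbit that $\gamma$ meets. Recall that the geometric input is the second variation / index form on a horizontal geodesic: when $\gamma$ is horizontal and regular, the quotient geodesic $\bar\gamma$ in $X_{reg}$ has the same length, and the O'Neill correction makes $X_{reg}$ more curved than $S^n$; the hypothesis $\kappa_X>1$ forces $\bar\gamma$ to have length strictly less than $\pi$, and conversely along such a short regular geodesic one controls the number of conjugate points of $\bar\gamma$ in $X$. The key point is that each time $\gamma$ passes near a singular orbit of dimension $\ell_i$, that orbit is responsible for $g-\ell_i$ ``extra'' focal directions (the normal space to the singular orbit inside the slice representation carries that many independent Jacobi fields that vanish, or become non-horizontal, at the singular point), contributing $g-\ell_i$ to the index of $\gamma$ as a geodesic in $S^n$; meanwhile $\bar\gamma$ being a geodesic of length $<\pi$ in an Alexandrov space of curvature $\geq\kappa_X>1$ means $\bar\gamma$ has no conjugate points, so its index in $X_{reg}$ is $0$.

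Concretely, I would run the following steps. First, set up the horizontal lift and the decomposition of Jacobi fields along $\gamma$ into the part tangent to the orbits (the ``vertical'' part, governed by the orbit geometry) and the part projecting to Jacobi fields of $\bar\gamma$ in $X$; this is exactly the bookkeeping already used in~\cite{GL4} for Lemma~\ref{mainlem}. Second, observe that the dimension of the space of vertical Jacobi fields along $\gamma$ that are forced to vanish at the endpoints, together with the corrections coming from the $s$ intermediate singular orbits, is at least $(m-1)+\sum_{i=1}^s (g-\ell_i)$: the $m-1$ comes from the endpoint contribution as in Lemma~\ref{mainlem} (a regular geodesic of length $<\pi$ in a space of curvature $>1$ has index $\geq m-1$ pulled back from below), and each intermediate crossing of a singular orbit of codimension $g-\ell_i$ in its ambient orbit stratum adds that many conjugate directions because the principal orbits degenerate onto it. Third, bound this total index from above by $g=n-m$, since $\gamma$ is a geodesic in $S^n$ and the vertical Jacobi fields along it live in the $g$-dimensional vertical distribution; this yields $g\geq (m-1)+\sum_{i=1}^s(g-\ell_i)$. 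Finally, specialize: if $s>1$, take just two of the terms in the sum to get $g\geq (m-1)+(g-\ell_1)+(g-\ell_2)$, i.e. $\ell_1+\ell_2\geq g+m-1 = n-1$.

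The main obstacle I anticipate is making rigorous the claim that each intermediate singular orbit contributes exactly $g-\ell_i$ to the index and that these contributions are independent of each other and of the endpoint contribution --- i.e. that the relevant Jacobi fields really are linearly independent and simultaneously span a subspace of the vertical distribution of dimension $(m-1)+\sum(g-\ell_i)$. Here one must be careful that the geodesic $\gamma$ is \emph{regular} (so the vertical distribution has constant rank $g$ along it) and that near each crossing one uses the slice representation at a point of the singular orbit: the normal space to the singular orbit within a principal orbit has dimension $g-\ell_i$ and supplies Jacobi fields along $\gamma$ that vanish at the point of closest approach. Independence across different crossings follows because the fields are supported (up to exponentially small tails, or after a standard cutoff / index-form argument) near disjoint parameter values along $\gamma$, so one can add their indices; independence from the endpoint term is the same argument as in Lemma~\ref{mainlem}. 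Once this linear-algebra/index-form accounting is in place, the inequality $g\geq (m-1)+\sum_i(g-\ell_i)$ and its corollary $\ell_1+\ell_2\geq n-1$ are immediate.
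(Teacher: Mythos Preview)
Your overall strategy---mimic the proof of Lemma~\ref{mainlem} and accumulate the focal contributions $g-\ell_i$ from each singular crossing, then bound everything inside the $g$-dimensional space of action (Killing) Jacobi fields---is exactly what the paper intends by ``very similar reasoning,'' and the final algebra $g\geq(m-1)+\sum_i(g-\ell_i)$, together with the specialization to $s\geq2$, is correct.

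The genuine gap is your justification of the $(m-1)$ term. The parenthetical claim ``a regular geodesic of length $<\pi$ in a space of curvature $>1$ has index $\geq m{-}1$'' is false as stated: a geodesic in $X_{reg}$ of length $L<\pi/\sqrt{\kappa_X}$ has no conjugate points at all, and the hypothesis only gives $L<\pi$, not $L>\pi/\sqrt{\kappa_X}$. So you cannot read off $m-1$ conjugate points of $\bar\gamma$ from $\kappa_X>1$ alone. In the argument of~\cite{GL4} the point is rather that one is free to \emph{extend} the regular horizontal geodesic to a length $\tilde L$ with $\pi/\sqrt{\kappa_X}<\tilde L<\pi$ (this window is nonempty precisely because $\kappa_X>1$); the extension stays regular generically and picks up no new singular crossings, so the $\sum_i(g-\ell_i)$ count is unchanged, while now Rauch comparison on the $(m-1)$-dimensional transverse Jacobi equation does force $m-1$ conjugate points of $\bar\gamma$ in $(0,\tilde L)$. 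Equivalently, at the pointwise level, $\kappa_X>1$ is exactly the statement that the O'Neill map $A_{\gamma'}\colon\mathcal H_{t_0}\cap(\gamma')^\perp\to\mathcal V_{t_0}$ is injective at every regular $t_0$, which is where the $(m-1)$ really comes from; your index-form/independence argument then packages this together with the vanishing of the action fields at the $t_i$ to obtain the inequality. Once you replace the incorrect index claim by either of these mechanisms, your sketch goes through.
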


\subsection{Enlarging group actions}
We consider the situation in which the $G$-action on $S(V)$ 
is the restriction of the action of a compact Lie group $H$
that contains $G$ as a closed subgroup. We will
need the following extension of the results in~\cite[\S2.3]{GL4}.
Recall that polar representations~\cite{D} are exactly those whose
orbit space has constant curvature~$1$~\cite[Introd.]{GL2}. 

\begin{prop}\label{enlargement}
\begin{enumerate}
\item[(a)] Suppose an orthogonal representation $\rho:G\to\OG{V}$
is the restriction of another representation $\tau:H\to\OG{V}$, where $G$ is a closed
subgroup of $H$.  If the cohomogeneity of~$\tau$ is at least $3$,
then $\kappa_\rho\leq\kappa_\tau$.
\item[(b)] The following classes of representations
$\rho:G\to\OG{V}$ are not highly curved:
\begin{enumerate}
\item[(i)] Representations~$\rho$ as in~(a), where $\tau$ is polar
and has cohomogeneity at least~$3$.
\item[(ii)] Tensor products, where $G=G_1\times G_2$, $V=V_1\otimes_{\mathbb F}V_2$
and $\dim_{\mathbb F}V_i\geq3$ for $i=1$, $2$ ($\mathbb F=\R$, $\C$, $\Q$).
\item[(iii)] Direct sums $V=V_1\oplus V_2$, where the $G$-action on $V_2$
has cohomogeneity at least $2$.
\item[(iv)] Direct sums $V=V_1\oplus\cdots\oplus V_n$, where $n>2$.
\end{enumerate}
\end{enumerate}
\end{prop}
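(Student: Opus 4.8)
The plan is to reduce everything to comparison geometry of Alexandrov spaces and to the curvature characterization of polar representations. For part~(a), the key observation is that when $G\subseteq H$ acts on $V$, the orbit projection factors as $S(V)/G\to S(V)/H$, and since both spaces are Alexandrov spaces of curvature $\geq1$ with the $H$-quotient metric induced from the $G$-quotient metric by a further Riemannian submersion on the regular parts, O'Neill's formula gives $\kappa_\rho\leq\kappa_\tau$ \emph{on the overlap of regular parts}. The subtlety — and the reason the cohomogeneity-$\geq3$ hypothesis appears — is that one needs the regular part of $X_\tau=S(V)/H$ to meet the image of the regular part of $X_\rho=S(V)/G$ in a set of dimension $\geq2$, so that $\kappa_\tau$ is actually detected there (a $1$-dimensional stratum carries no sectional-curvature information). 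Since $\dim X_\tau\geq2$ exactly when the cohomogeneity of $\tau$ is $\geq3$, and the $H$-principal stratum is open dense in $X_\tau$ while the $H$-principal locus in $S(V)$ is $G$-invariant and open dense, the curvature of $X_\tau$ is attained (as an infimum) along directions lying in the submersed image, so the O'Neill inequality $\kappa_\rho\leq\kappa_\tau$ propagates. I would spell this out following the argument of \cite[\S2.3]{GL4}, being careful that horizontal geodesics for the $G$-submersion that are also horizontal for the $H$-submersion exist in abundance.

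Part~(b)(i) is then immediate: if $\tau$ is polar, then by the cited result of Dadok and \cite[Introd.]{GL2} the orbit space $S(V)/H$ has constant curvature $1$, so $\kappa_\tau=1$; combining with (a) gives $\kappa_\rho\leq1$, i.e.\ $\rho$ is not highly curved. For (b)(ii), a tensor product $V=V_1\otimes_{\mathbb F}V_2$ with both factors of $\mathbb F$-dimension $\geq3$ is the restriction of the full tensor action of $\OG{V_1}\times\OG{V_2}$ (or the appropriate unitary/symplectic groups), which is polar — it is one of the standard examples of a polar (indeed, a $\sigma$-action / symmetric-space-isotropy) representation — of cohomogeneity $\min(\dim_{\mathbb F}V_1,\dim_{\mathbb F}V_2)\geq3$ over $\R$, hence of sphere-cohomogeneity $\geq3$; so (b)(i) applies. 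Here the only thing to check is the cohomogeneity count, which forces the $\geq3$ hypotheses on the dimensions of the factors.

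For (b)(iii), a direct sum $V=V_1\oplus V_2$ where the $G$-action on $V_2$ has cohomogeneity $\geq2$ is the restriction of the $G\times\OG{V_2}$-action (trivial first factor on $V_1$, natural on $V_2$); but this is really the case $n>2$ after noting that the $\OG{V_2}$-summand $V_2$ splits off, so I would instead argue directly, or simply reduce (iii) to (iv): enlarge to $\widetilde G=G\times\OG{V_2}'$ so that the summand $V_2$ with cohomogeneity $\geq2$ contributes at least a further orthogonal summand, giving a sum of $\geq3$ nontrivial summands, and invoke (iv). For (b)(iv), a direct sum of $n>2$ nontrivial summands is the restriction of the action of $G\times\OG1^{\,?}$... more to the point, one passes to the overgroup acting as the product of the given actions on each summand together with the scaling/reflection on each line, which makes the representation a restriction of a polar representation (a product of cohomogeneity-one or reflection pieces) of cohomogeneity $\geq n>2$; alternatively, and cleaner, one invokes the classification-free observation of \cite[\S2.3]{GL4} that a sum of $\geq3$ summands sits inside $\OG{V_1}\times\cdots\times\OG{V_n}$ acting coordinatewise, whose quotient is a spherical simplex of constant curvature $1$ and dimension $n-1\geq2$, so (b)(i) applies. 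The main obstacle, and the step deserving the most care, is part~(a): making precise that the O'Neill comparison survives the passage to the completion and is witnessed on a stratum of dimension $\geq2$ of $X_\tau$ — this is exactly why the cohomogeneity hypothesis is not cosmetic — and then in (b) correctly identifying each enlargement as polar with the right cohomogeneity. I expect (b)(ii)--(iv) to each be a one- or two-line deduction once (a) and the list of standard polar representations are in hand.
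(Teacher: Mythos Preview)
Your treatment of (a), (b)(i), and (b)(ii) is correct and matches the paper, which simply cites \cite{GL4} for these parts. Your direct argument for (b)(iv)---enlarging to $\OG{V_1}\times\cdots\times\OG{V_n}$ and observing that the quotient is a spherical $(n-1)$-simplex of constant curvature~$1$ with $n-1\geq2$---is valid, though the paper instead reduces (iv) to (iii) by writing $V=V_1\oplus(V_2\oplus\cdots\oplus V_n)$ and noting that $G$ has cohomogeneity $>1$ on the second summand.

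Your argument for (b)(iii), however, has a genuine gap. You propose enlarging to $G\times\OG{V_2}$ (or some variant), but enlarging on the $V_2$ side is the wrong move: the resulting group acts transitively on $S(V_2)$, so the quotient becomes the suspension over $S(V_1)/G_1$, and nothing in the hypotheses controls the cohomogeneity of $G$ on $V_1$. If $G$ happens to be transitive on $S(V_1)$, the enlarged action has cohomogeneity~$2$ and (a) does not apply. Your attempted reduction of (iii) to (iv) also fails: cohomogeneity $\geq2$ on $V_2$ does not force $V_2$ to split into further $G$-invariant summands, so you cannot manufacture $n>2$ pieces.

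The paper's fix is to enlarge on the \emph{other} side: take $H=\SO{V_1}\times G_2$ with $G_2=G/\ker\rho_2$. Then $S(V)/H$ is the spherical suspension over $S(V_2)/G_2$, which by hypothesis has dimension $\geq1$; hence $S(V)/H$ has dimension $\geq2$ and contains, as the suspension over any geodesic in $S(V_2)/G_2$, a convex totally geodesic surface locally isometric to the unit sphere. Thus $\kappa_\tau=1$, and (a) gives $\kappa_\rho\leq1$. Note that this uses (a) directly rather than (b)(i), since $\tau$ need not be polar.
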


\Pf See~\cite{GL4} for (a), (i) and (ii). 
For~(iii), we take
$H=\SO{V_1}\times G_2$,
where $G_2=G/\ker\rho_2$, and note that $S(V)/H$ is the
suspension (or spherical cone) over $S(V_2)/G_2$. Since $\dim S(V_2)/G_2\geq1$,
the suspension over a non-constant geodesic in $S(V_2)/G_2$ is a
convex totally geodesic surface in $S(V)/H$ which is locally
isometric to the unit sphere~\cite[\S3.6.3]{BBI}, hence $\kappa_\tau=1$
and we can apply~(a). Finally, the case~(iv) is reduced to the previous
case simply by writing $V=V_1\oplus(V_2\oplus\cdots\oplus V_n)$
and noting that the cohomogeneity of $G$ on
$V_2\oplus\cdots\oplus V_n$ is bigger than one. \EPf


\section{Some interesting examples}

In this section we show that a few specific representations
are (are not) highly curved. These results are part of the proof
of Theorem~\ref{main}. 

\subsection{The curvature of complex weighted projective lines}\label{cwpl}

Let $\U1$ act on $\C\oplus\C$ with parameters $(a,b)$, namely,
$\xi\cdot(z,w)=(\xi^az,\xi^b,w)$ for $\xi\in\U1\subset\C$ and 
$z$, $w\in\C$. We assume 
that $a$ and $b$ are co-prime, positive integers, and $a\geq b$. The map
\[ F:(0,\pi/2)\times(0,2\pi)\to S^3,\quad F(r,\theta)=(\cos r,
e^{i\theta}\sin r) \]
meets all principal orbits, so the orbital metric in the principal stratum
of $X=S^3/\U1$ can be easily computed in terms of~$F$ to give
\[ g= dr^2 + \frac 14 \frac{a^2 \sin^2 2r}{a^2 \cos^2 r + b^2 \sin^2 r}\, d\theta^2. \]
This is a rotationally symmetric metric, whose Gaussian curvature is given
by 
\[ K(r) = \frac{3a^4 + 26a^2 b^2+ 3b^4+4(a^4-b^4)\cos 2r +(a^2-b^2)^2\cos 4r}{2(a^2+b^2+(a^2-b^2)\cos 2r)^2}. \]
We have 
\[ K'(r)=\frac{48a^2b^2(a^2-b^2)\sin 2r}{(a^2+b^2+(a^2-b^2)\cos 2r)^3} > 0, \]
so that
\[ K_{inf}=K(0+)=1+ 3 \frac{b^2}{a^2},\qquad K_{sup}=K(\frac\pi2-)=1+ 3 \frac{a^2}{b^2} \]
and hence $1<\kappa_X<4$, unless $a=b=1$ in which case $X$ is a $2$-sphere 
of constant curvature $4$. In any case, $X$ is highly curved. 

\subsection{The representation $(\SU2,\C^2\oplus\R^3)$}\label{c2r3}

In terms of quaternions, this representation is $(\SP1,\Q\oplus\Im\Q)$
where $q\cdot(x,y)=(qx,qyq^{-1})$. The only non-principal orbit in $S^6(1)$ 
corresponds to $x=0$. The map 
\[ \Q\oplus\Im\Q \to \Im\Q \cong\R^3, \qquad (x,y)\mapsto x^{-1}yx \]
is well-defined and constant on principal orbits, applies  
the regular part of $S^6(1)$ onto the interior of the closed ball $\bar B^3(1)$, and 
a neighborhood of the singular orbit to a neighborhood of the boundary
$\partial\bar B^3(1)=S^2(1)$. 
It follows that $X$ is topologically a $3$-sphere. A section of the 
above map over the regular set is
\[ B^3(1) \subset\R^3 \to S^6(1) \subset \Q \oplus\Im \Q,\qquad
v\mapsto (\sqrt{1-||v||^2},v) \]
which, in spherical coordinates, is written
\begin{eqnarray*}
 \lefteqn{(r,\theta,\varphi)\in(0,\pi/2)\times(0,2\pi)\times(0,\pi)\mapsto} \\
&&(\cos r,0,0,0,\sin r\cos\theta\sin\varphi,\sin r\sin\theta\sin\varphi,\sin r \cos\varphi)\in S^6(1)\subset\R^7. 
\end{eqnarray*}
One easily computes the inner products of the horizontal components of 
$\frac{\partial}{\partial r}$, $\frac{\partial}{\partial\theta}$, 
$\frac{\partial}{\partial\varphi}$ to obtain the orbital metric coefficients:
\[ g = dr^2 + \frac{\sin^2 2r}{10-6\cos 2r}(d\varphi^2 + \sin^2 \varphi\, d\theta^2). \]
This is a warped product $X_{reg}=[0,\pi/2)\times_f S^2(1)$ where 
$f(r) = \frac12 \frac{\sin 2r}{\sqrt{\cos^2 r + 4 \sin^2r}}$ is the coefficient 
of the metric associated to the complex weighted projective line of weights~$(1,2)$.  
The sectional curvatures of such a warped product 
are well known~\cite[\S~3.2.3]{petersen}, namely,
they all lie between $-f''/f$ and $(1-f')f^{-2}$. We have $\Ima(-f''/f)=(7/4,13)$ and
$\Ima((1-f')f^{-2})=[9,+\infty)$, so $\kappa_X=7/4$ and $X$ is 
highly curved. 

\subsection{The representation $(\SO3,\R^7)$}\label{so3}

This representation is induced from the real form $V$ of $(\SU2,\mathrm{Sym^6(\C^2)})$
given by 
\[ \mathrm{span}_{\mathbb R}\{e_1^6+e_2^6,i(e_1^6-e_2^6), e_1^5e_2-e_1e_2^5, i(e_1^5e_2+e_1e_2^5),
e_1^4e_2^2+ e_1^2e_2^4, i(e_1^4e_2^2-e_1^2e_2^4),ie_1^3e_2^3\}. \]
There is exactly one singular orbit, namely, that through $p=ie_1^3e_2^3$,
whose isotropy group is the (diagonal) maximal torus (circle). It is 
easy to find 
$g=\left(\begin{array}{cc}\alpha&-\bar\beta\\\beta&\bar\alpha\end{array}\right)\in\SU2$ 
such that $q=gp\neq-p$ is orthogonal to 
$T_p(Gp)=\mathrm{span}_{\mathbb R}\{e_1^4e_2^2+ e_1^2e_2^4, i(e_1^4e_2^2-e_1^2e_2^4)\}$, e.g.
any $\alpha$, $\beta$ with $|\alpha|^2=\frac12(1\pm\frac1{\sqrt5})$, 
$|\beta|^2=1-|\alpha|^2$ will do. It follows that there is a regular horizontal 
geodesic of length smaller than $\pi$ that meets $Gp$ in two points, namely, a 
minimizing geodesic segment between $p$ and $q$.
Since $\ell_1=\ell_2=2$ and $n=6$, Lemma~\ref{improvedlem} implies that 
this representation is not highly curved.

\subsection{The representation $(\SP1\times\SP1,\Q^3\otimes_{\mathbb H}\Q)$}\label{h3xh}

We will show that this representation is not highly curved. 
We consider a double quotient
\begin{diagram}
&&S^{11}&&\\
&\ldTo&&\rdTo&\\
Y=S^{11}/H&&&&Z=S^{11}/K=\Q P^2\\
&\rdTo&&\ldTo&\\
&&X=S^{11}/G&&\\
\end{diagram}
where: $H$ is $\SP1$ acting on the left, which we view as the 
representation of quaternionic type
$(\SU2,\mathrm{Sym}^5(\C^{2}))$; $K$ is $\SP1$ acting on the right; and $G=H\times K$. 

Note that $Y$ is a Riemannian orbifold (a quaternionic weighted projective
space). View the representation space of $H$ as
\[ \mathrm{span}_{\mathbb C}\{e_1^5,e_1^4e_2,e_1^3e_2^2,e_1^2e_2^3,e_1e_2^4,e_2^5\} \]
and take $p=e_1^5$. The isotropy group $H_p$ is the cyclic group
$\mathbb Z_5$, say with a generator $h=\mathrm{diag}(e^{i\omega},e^{-i\omega})$, 
where $\omega=2\pi/5$, the tangent space
\[ \Lh\cdot p =\mathrm{span}_{\mathbb R}\{ie_1^5,e_1^4e_2,ie_1^4e_2\} \]
and the normal space 
\[ N_p(Hp)=\mathrm{span}_{\mathbb C}\{e_1^3e_2^2,e_1^2e_2^3,e_1e_2^4,e_2^5\}.\]
It follows that $h$ acts on $\Lh\cdot p$ 
is $\mathrm{id}_{\mathbb R}\oplus R_{3\omega}$, 
where $R_{\theta}$ denotes a rotation of angle $\theta$ 
on an oriented $2$-plane. Similarly, the action of $h$ on $N_p(Hp)$ 
is $\mathrm{id}_{\mathbb R^2}\oplus R_{\omega}
\oplus R_{2\omega}\oplus R_{4\omega}$. 
Since the O'Neill tensor $A^H_p:\Lambda^2N_p(Hp)\to\Lh\cdot p$ is 
$H_p$-equivariant and  $4\pm2\neq0$, $3$ mod $5$, we find that
$w_1=e_1^2e_2^3$ and $w_2=e_1e_2^4$ satisfy $A^H_p(w_1\wedge w_2)=0$, that is, the
$2$-plane $\sigma$ spanned by $w_1$, $w_2$ projects to a $2$-plane of sectional
curvature $1$ in $Y$. 

The quaternionic structure on $\mathrm{Sym}^5(\C^2)$ is induced from that 
of $\C^2$. Since the latter maps $e_1$ to $e_2$ and $e_2$ to $-e_1$, the 
former maps $w_1=e_1^2e_2^3$ to $-e_1^3e_2^2$ and $w_2=e_1e_2^4$ to $e_1^4e_2$, 
so $\sigma$ is a totally real plane and maps to 
a $2$-plane of sectional curvature $1$ in $Z$. Equivalently, the 
O'Neill tensor of $S^{11}\to\Q P^2$ vanishes on $w_1\wedge w_2$. 

Let $x$ be the projection of $p$ to $X$. This is an isolated
singular point in $X$, but $p$ is an exceptional point of the $H$-action and a 
regular point of the $K$-action, so we have continuity at $p$ of the 
O'Neill tensors of Riemannian submersions to $Y$ and $Z$.
It follows that there is a sequence of $G$-regular points $p_n\to p$ 
and $2$-planes $\sigma_n$ tangent to $S^{11}$ at $p_n$ projecting to 
$2$-planes in $X$ with sectional curvature $\to1$. Hence $\kappa_X=1$. 

\subsection{The representation $(\SU2,\Q^2)$}\label{su2}

We will prove that this representation is not highly curved.
Let $G=\SU2$. We view this representation as the cubic symmetric power 
$V=\mathrm{Sym}^3(\C^{2*})$. Namely, write an arbitrary element $g\in\SU2$ as
\begin{equation}\label{g}
 g=\left(\begin{array}{cc}\alpha&-\bar\beta\\
                            \beta & \bar\alpha
\end{array}\right) 
\end{equation}
where $\alpha$, $\beta\in\C$, $|\alpha|^2+|\beta|^2=1$. 
This exhibits the matrix representation of the standard action  
of $\SU2$ on $\C^2$ with respect to the canonical basis $\{e_1,e_2\}$. 
Let $\{u,v\}$ be the dual basis of $\C^{2*}$. The action 
of $g$ on $\C^{2*}$ is represented by the matrix complex-conjugate 
to~(\ref{g}) with respect to this basis. 
Now an orthonormal basis of $V$ is given by 
\[ \left\{ \frac{u^3}{\sqrt6}, \frac{uv^2}{\sqrt2}, 
\frac{v^3}{\sqrt6},  \frac{u^2v}{\sqrt2}\right\}. \]
(We have chosen the order in the basis in view of the quaternionic structure
below.) In this basis, the action of $g$ on $V$ is represented 
by the matrix
\begin{equation}\label{g2} 
\left(\begin{array}{cccc}
\bar\alpha^3 & \sqrt3\bar\alpha\beta^2 & -\beta^3 & -\sqrt3\bar\alpha^2\beta \\
\sqrt3\bar\alpha\bar\beta^2 & \alpha(|\alpha|^2-2|\beta|^2) &
-\sqrt3\alpha^2\beta &\bar \beta(2|\alpha|^2-|\beta|^2) \\
\bar\beta^3 & \sqrt 3\alpha^2 \bar\beta & \alpha^3 & \sqrt3\alpha\bar\beta^2\\
\sqrt3\bar\alpha^2\bar\beta & \beta(|\beta|^2-2|\alpha|^2) & 
\sqrt3 \alpha\beta^2 & \bar\alpha(|\alpha|^2-2|\beta|^2) \end{array}\right) 
\end{equation}
 
On the level of Lie algebras, 
consider the basis
\begin{equation}\label{basis}
 \bf i=\left(\begin{array}{cc}i&0\\
                              0&-i
\end{array}\right),\quad 
j=\left(\begin{array}{cc}0&-1\\
                              1&0
\end{array}\right),\quad 
k=\left(\begin{array}{cc}0&-i\\
                           -i&0
\end{array}\right)
\end{equation}
of $\mathfrak{su}(2)$. 
We have $[{\bf i},{\bf j}]=2{\bf k}$ and cyclic permutations.
These matrices, viewed as elements of $\Lg$, operate on $V$ as
\[ i_L=\left(\begin{array}{cccc}-3i&&&\\
                              &i&&\\
                              &&3i&\\
                              &&&-i
\end{array}\right),\quad 
j_L=\left(\begin{array}{cccc}&&0&-\sqrt3\\
                         &&-\sqrt3&2\\
                         0&\sqrt3&&\\
                         \sqrt3&-2&&
\end{array}\right),\]
and
\[ k_L=\left(\begin{array}{cccc}&&0&i\sqrt3\\
                         &&i\sqrt3&2i\\
                         0&i\sqrt3&&\\
                         i\sqrt3&2i&&
\end{array}\right).
\]
Beware that $i_Lj_L\neq k_L$, etc.

\subsubsection{Quaternionic structure}

The matrix~(\ref{g2}) has the form
\[ \left(\begin{array}{cc}A&-\bar B\\B& \bar A\end{array}\right). \]
If we identify $V\cong\C^4$ with $\Q^2$ via the map
\[ x_1\frac{u^3}{\sqrt6}+ x_2\frac{uv^2}{\sqrt2}+ y_1
\frac{v^3}{\sqrt6}+ y_2 \frac{u^2v}{\sqrt2} \mapsto (x_1+jy_1,x_2+jy_2) \]
then the representation is given by left multiplication by the quaternionic
matrix $A+jB$. In particular, the normalizer of the representation of 
$G$ on $V$ contains another copy $G'$ of $\SU2$ acting on the right.
The action of $q\in\SP1\cong G'$ on $V$ is given by 
right multiplication of $\Q^2$ by $q^{-1}$.  
It is \emph{not} complex linear.

We describe the action of the basis elements~(\ref{basis}) 
of $\Lg'$ as follows:
\[ i_R(x_1+jy_1,x_2+jy_2)=((-ix_1)+j(-iy_1),(-ix_2)+j(-iy_2)), \]
\[ j_R(x_1+jy_1,x_2+jy_2)=(\bar y_1-j\bar x_1,\bar y_2-j\bar x_2), \] 
\[ k_R(x_1+jy_1,x_2+jy_2)=((-i\bar y_1)+j(i\bar x_1),(-i\bar y_2)+j(i\bar x_2)).
\] 

\subsubsection{Cohomogeneity one}

Since $G'$ normalizes $G$, it acts on $X=S(V)/G$. Indeed the group generated 
by $G$ and $G'$ is the full normalizer $K=\SO4$ of $G$ in $\OG V=\OG8$.
The representation of $K$ on $V$ is the isotropy representation of the 
symmetric space $\G/\SO4$, of rank $2$. It follows that $X/G'$ is 
one-dimensional and in fact isometric to the interval $[0,\pi/6]$. 
Now the action of $G'$ on $X$ has cohomogeneity one. 
A $K$-horizontal geodesic is given by
\[ \gamma(t) = \cos t \frac{u^3}{\sqrt6} + \sin t \frac{uv^2}{\sqrt2}. \]
It suffices to compute the sectional curvatures of $X$ along the projection 
of~$\gamma$.

\subsubsection{Natural frames}

For future reference, we compute:
\[ i_R\gamma(t) = -\cos t \frac{iu^3}{\sqrt6} - \sin t \frac{iuv^2}{\sqrt2} \]
\[ j_R\gamma(t) = -\cos t \frac{v^3}{\sqrt6} - \sin t \frac{u^2v}{\sqrt2} \]
\[ k_R\gamma(t) = \cos t \frac{iv^3}{\sqrt6} + \sin t \frac{iu^2v}{\sqrt2} \]
\[ i_L\gamma(t) = -3\cos t \frac{iu^3}{\sqrt6} + \sin t \frac{iuv^2}{\sqrt2} \]
\[ j_L\gamma(t) = \sqrt3 \sin t \frac{v^3}{\sqrt6}+(-2\sin t+\sqrt3\cos t)\frac{u^2v}{\sqrt2} \]
\[ k_L\gamma(t) = \sqrt3\sin t \frac{iv^3}{\sqrt6}
+(2\sin t + \sqrt 3 \cos t )\frac{iu^2v}{\sqrt2}. \]
It is useful to note that there is an orthogonal decomposition
\[ T_pS^7 = \langle\gamma'(t)\rangle\oplus
\langle i_Rp,i_Lp\rangle\oplus\langle j_Rp,j_Lp\rangle\oplus\langle k_Rp,k_Lp\rangle\]
where $p=\gamma(t)$. 

\subsubsection{The Weyl group}\label{weyl}

The singular points
of the $K$-action on $X$ are~$p_1=\gamma(0)$ and~$p_2=\gamma(\pi/6)$. 
Their isotropy groups are given by 
\[ K_{p_1}=\langle(e^{i\theta},e^{-3i\theta}), (j,j)\rangle, \]
\[ K_{p_2}=\langle(e^{j\theta},e^{j\theta}), (i,i)\rangle. \]
Note that 
\[ K_{princ}=\{\pm(1,1),\pm(i,i),\pm(j,j),\pm(k,k)\}. \]
The reflections at $p_1$, $p_2$ are given by 
\[ w_1=e^{i\pi/4}(1,-1),\quad w_2=e^{j\pi/4}(1,1). \]
Let $w=w_1w_2$. Then $w$ maps $\gamma(t)$ to $\gamma(t-\pi/3)$
and acts by conjugation on $K_{princ}$ by
cyclically permuting $(i,i)$, $(j,j)$, $(k,k)$. 

\subsubsection{The O'Neill tensor} 

The vertical space at $p=\gamma(t)\in S(V)$ 
is spanned by $i_Lp$, $j_Lp$, $k_Lp$; this is an orthogonal frame.  
Also $i_Rp$, $j_Rp$, $k_Rp$ is an orthogonal frame. The only nonzero 
inner products between vectors in the two sets are: 
\[ i_0(t):=\langle i_L\gamma(t),i_R\gamma(t)\rangle = 4\cos^2 t-1, \]
\[ j_0(t):=\langle j_L\gamma(t),j_R\gamma(t)\rangle=
\langle i_L\gamma(t+\pi/3),i_R\gamma(t+\pi/3)\rangle\]
and 
\[k_0(t):=\langle k_L\gamma(t),k_R\gamma(t)\rangle=
\langle i_L\gamma(t+2\pi/3),i_R\gamma(t+2\pi/3)\rangle,\]
using the action of the Weyl group element~$w$. Moreover
\[ ||i_Rp||^2=||j_Rp||^2=||k_Rp||^2=1, \]
\[ ||i_L\gamma(t)||^2= 1+8\cos^2t, \]
\[ ||j_L\gamma(t)||^2=||i_L\gamma(t+\pi/ 3)||^2\]
and
\[ ||k_L\gamma(t)||^2=||i_L\gamma(t+2\pi/ 3)||^2.\]

Denote by $i_R^h(t)$ the vector field along $\gamma$ in $S^7$ 
given by the horizontal projection of $i_R\gamma(t)$. 
Then 
\[ i_R^h(t)= i_R\gamma(t)- I_0(t)i_L\gamma(t) \]
where $I_0(t)=i_0(t)/||i_L\gamma(t)||^2$; put also
$I(t):=||i_R^h(t)||$. Define similarly $j_R^h$, $k_R^h$, 
$J_0$, $K_0$, $J$, $K$.  
A natural horizontal orthonormal frame along $\gamma$ 
is now given by $\gamma'=\partial/\partial t$, $i_R^h/I$, $j_R^h/J$, 
$k_R^h/K$. 

We use O'Neill's formula to show that there is a value of $t$ for which 
the plane spanned by $i_R^h$, $j_R^h$ projects
to a plane of curvature $1$ in $X$. In fact, equivariantly extend
$i_R^h$, $j_R^h$, $k_R^h$ to vector fields in $S^7$, denote the Levi-Civit\`a 
connection of $S^7$ by $\nabla$ and  
the O'Neill tensor of $S^7\to X$ by~$A$. Then
\begin{eqnarray*}
A_{i_R^h}j_R^h &=& (\nabla_{i_R^h}j_R^h)^v \\
&=& \langle\nabla_{i_R^h}j_R^h,k_L   \rangle \frac{k_L}{||k_L||^2},
\end{eqnarray*}
where 
\begin{eqnarray*}
\langle\nabla_{i_R^h}j_R^h,k_L   \rangle&=& -\langle j_R^h,\nabla_{i_R^h}k_L   \rangle \\
&=& -\langle j_R^h,k_L(i_R^h)   \rangle\quad \text{(since $k_L$ is a linear vector field)} \\
&=& k_0(t)+I_0(t)j_0(t)+J_0(t)i_0(t)-I_0(t)J_0(t)\langle k_Li_L\gamma(t),j_L\gamma(t)\rangle.
\end{eqnarray*}
The sectional curvature is
\begin{eqnarray*}
K_X(i_R^h\wedge j_R^h)|_t & = & 1 + 3 \frac{||(A_{i_R^h}j_R^h)_t||^2}{I(t)^2J(t)^2}\\ & = & 1 - 27 \frac{(-2 \sqrt3 - \sqrt3 \cos{2t} +\sin{2t} + 
      4 \sin{4t})^2}{P(t)} 
\end{eqnarray*}
where 
\begin{eqnarray*}
 \lefteqn{P(t)=(5 + 4 \cos{2t}) (5 - 2 \cos{2t} + 
      2 \sqrt3 \sin{2t})} \\
&& (-10 + 2\cos{2t} - 5\cos{4t} + 
      4 \cos{6 t} + 2 \sqrt3 \sin{2 t} + 5 \sqrt3 \sin{4t}).
\end{eqnarray*}
It is easy to prove that there is $t_0\in(0,\pi/6)$ such that 
\[ -2 \sqrt3 - \sqrt3 \cos{2t} +\sin{2t} + 
      4 \sin{4t} =0, \]
      which shows the existence of planes with curvature~$1$.
      Indeed $t_0=\frac{\pi}3-\frac12\arccos\frac14\approx 0.38814$. 

\begin{figure}
\includegraphics[scale=.5]{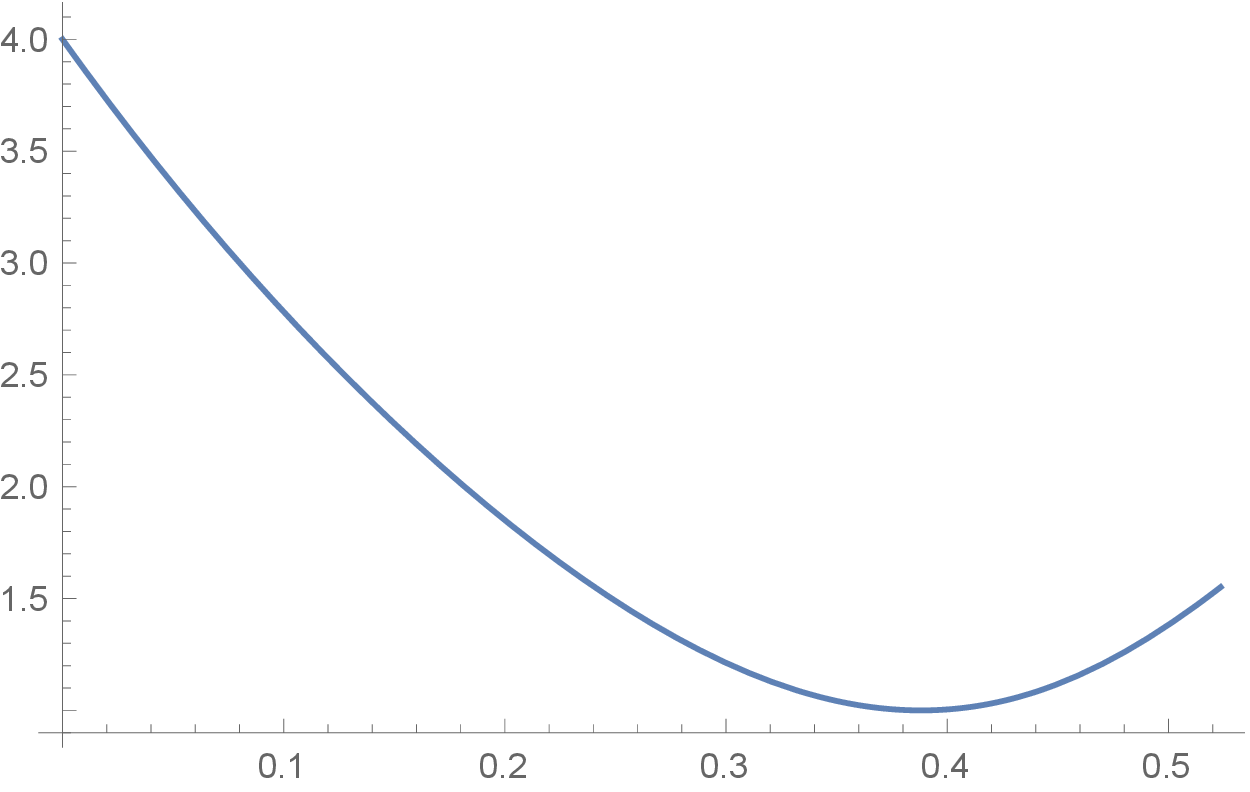}
\caption{Graph of $K(i_R^h\wedge j_R^h)$.}
\end{figure}

\subsection{The representation $(\U2,\C^4)$}\label{u2}

This representation is an enlargement of that in subsection~\ref{su2};
we retain the notation therein. View 
$H=\U2$ as the group generated by $G\cong\SU2$ and a circle 
subgroup of $G'\cong\SU2$, and denote the orbit space $S^7/H$ by $Y$. 
We will show that $Y$ is highly curved.

\subsubsection{Description of all $2$-planes in $X=S^7/\SU2$ with 
sectional curvature~$1$}\label{thorpe}

We use ideas of~\cite{thorpe} in dimension~$4$.
Let $x_t\in X$ be the projection of $\gamma(t)$.
The curvature operator $R_t:\Lambda^2T_{x_t}X \to \Lambda^2T_{x_t}X$
is self-adjoint, and its matrix with respect to the orthonormal basis 
\begin{equation}\label{frame}
  \frac{\partial}{\partial t}\wedge\frac{i_R^h}I, 
\frac{\partial}{\partial t}\wedge\frac{j_R^h}J, 
\frac{\partial}{\partial t}\wedge\frac{k_R^h}K, 
\frac{j_R^h}J\wedge\frac{k_R^h}K,\frac{k_R^h}K\wedge\frac{i_R^h}I,\frac{i_R^h}I\wedge\frac{ j_R^h}J 
\end{equation}
is
\[ \left(\begin{array}{cccccc}
a_1 &&& b_1 &&\\
& a_2 &&& b_2 & \\
&& a_3 &&& b_3 \\
b_1 &&& c_1 &&\\
& b_2 &&& c_2 & \\
&& b_3 &&& c_3 \\
\end{array}\right)=  \left(\begin{array}{cc}A & B \\ B& C\end{array}\right) \]
where $a_i$, $b_i$, $c_i$ are smooth functions 
of $t\in(0,\pi/6)$. 
The diagonal elements of $A$ and $C$ are sectional curvatures and,
by the Bianchi identity, the trace of $B$ is zero. Note that $c_3$ was computed in
the previous section, and the other functions are 
computed similarly; their explicit values are listed in the appendix.
In view of the acton of the Weyl group~(subsection~\ref{weyl}),
these functions satisfy
\[ a_2(t)=a_1\left(t+\frac{\pi}3\right),\ a_3(t)=a_1\left(t+\frac{2\pi}3\right), \]
\begin{equation}\label{w}
 b_2(t)=b_1\left(t+\frac{\pi}3\right),\ b_3(t)=b_1\left(t+\frac{2\pi}3\right), 
\end{equation}
\[ c_2(t)=c_1\left(t+\frac{\pi}3\right),\ c_3(t)=c_1\left(t+\frac{2\pi}3\right). \]

Let $Z(\tilde R_t)$, where $\tilde R_t=R_t-I$, denote the subset of the 
Grassmannian $G_t:=\mathrm{Gr}_2(T_{x_t}X)$ consisting of points where the 
sectional curvature is $1$. We will show that 
$\ker(\tilde R_t-\mu(t)\ast)\cap G_t$ is non-empty, where 
$\ast$ denotes the Hodge star operator on~$\Lambda^2T_{x_t}X$ 
and~$\mu(t)$ is a certain 
smooth function. It will follow
from the remark after Theorem~2.1 and Theorem~4.1 in~\cite{thorpe} that 
$Z(\tilde R_t)=\ker(\tilde R_t-\mu(t)\ast)\cap G_t$.  

Let $\mu_i^{\pm}(t,\epsilon)=b_i(t)\pm\sqrt{(a_i(t)-1)(c_i(t)-1)}$ for
$i=1$, $2$, $3$. Take
$\mu=\mu_1^{-}$ and put $\alpha_i(t)=a_i(t)-1$, $\beta_i(t)=b_i(t)-\mu(t)$,
$\gamma_i(t)=c_i(t)-1$.
Using the explicit formulae in the appendix, one checks that
\[ ((b_1-b_2)^2-\alpha_1\gamma_1-\alpha_2\gamma_2)^2=4\alpha_1\alpha_2\gamma_1\gamma_2
\]
and 
\[ (b_1-b_2)^2-\alpha_1\gamma_1-\alpha_2\gamma_2>0,\ b_1>b_2. \]
It follows that 
\[ \mu_1^-(t)=\mu_2^+(t) \qquad\text{for all $t$.} \]
Similarly, one checks that 
\[ \mu_1^-(t)=\left\{\begin{array}{ll}
    \mu_3^+(t) & \text{for $t\leq t_0$,} \\
    \mu_3^-(t) & \text{for $t\geq t_0$.} \end{array}\right. \]

It immediately follows that 
$\ker(\tilde R_t-\mu(t)\ast)$ is $3$-dimensional and spanned by
\[ -\beta_1(t)\frac{\partial}{\partial t}\wedge\frac{i_R^h}I+\alpha_1(t)\frac{j_R^h}J\wedge\frac{k_R^h}K,\quad
-\beta_2(t)\frac{\partial}{\partial t}\wedge\frac{j_R^h}J+\alpha_2(t)\frac{k_R^h}K\wedge\frac{i_R^h}I \]
and 
\[ -\beta_3(t)\frac{\partial}{\partial t}\wedge\frac{k_R^h}K+\alpha_3(t)\frac{i_R^h}I\wedge\frac{j_R^h}J. \]
Let $r_1$, $r_2$, $r_3$ be the corresponding coordinates
on $\ker(\tilde R_t-\mu(t)\ast)$. The Pl\"ucker and 
normalization relations defining 
$\ker(\tilde R_t-\mu(t)\ast)\cap G_t$ now are
\[ \sum_{i=1}^3 r_i^2\alpha_i\beta_i=0,\qquad \sum_{i=1}^3 r_i^2(\alpha_i^2+\beta_i^2)=1. \] 
Solving these relations yields:
\begin{equation}\label{r1r2}
 r_1=\left\{\begin{array}{ll}
\pm\frac{1}{\sqrt{A_1(t)}}\cosh \theta&\mbox{if $t<t_0$} \\
\theta & \mbox{if $t=t_0$} \\
\frac{1}{\sqrt{-A_1(t)}}\sinh \theta&\mbox{if $t>t_0$,} \end{array}\right. 
\quad 
r_2=\left\{\begin{array}{ll}
\frac{1}{\sqrt{-A_2(t)}}\sinh \theta&\mbox{if $t<t_0$} \\
\pm\left(-\frac{\alpha_1(t_0)\beta_1(t_0)}{\alpha_2(t_0)\beta_2(t_0)}\right)^{1/2}\theta & \mbox{if $t=t_0$} \\
\pm\frac{1}{\sqrt{A_2(t)}}\cosh \theta&\mbox{if $t>t_0$} \end{array}\right. 
\end{equation}
and
\[ r_3^2= \frac{1-r_1^2(\alpha_1^2(t)+\beta_1^2(t))-r_2^2(\alpha_2^2(t)+\beta_2^2(t))}{\alpha_3^2(t)+\beta_3^2(t)}, \]
where 
\[ A_1=\alpha_1^2+\beta_1^2
-\frac{\alpha_1\beta_1}{\alpha_3\beta_3}(\alpha_3^2+\beta_3^2),
\quad
A_2=\alpha_2^2+\beta_2^2
-\frac{\alpha_2\beta_2}{\alpha_3\beta_3}(\alpha_3^2+\beta_3^2). \]
We deduce that for each $t\in(0,\pi/6)$ there is a one-parameter family of 
$2$-planes in $T_{x_t}X$ with sectional curvature $1$. 

\subsubsection{O'Neill's formula and the sectional curvatures of $S^7/\U2$}

We will use an argument based on a double quotient similar 
to that in subsection~\ref{h3xh}:
\begin{diagram}
&&S^7&&\\
&\ldTo&&\rdTo&\\
X=S^7/\SU2&&&&S^7/\U1=\C P^3\\
&\rdTo&&\ldTo&\\
&&Y=S^7/\U2&&\\
\end{diagram}

For some nonzero $\xi\in\Lh/\Lg=\mathfrak{u}(2)/\mathfrak{su}(2)$, 
the induced $\xi_R$ is a unit vertical vector 
field of $X\to Y$. Let now $u$, $v$ be linearly independent
tangent vectors to $S^7$, horizontal 
with respect to $S^7\to Y$. Then 
\begin{equation}\label{xiruv}
 \langle \xi_R(u),v\rangle 
\end{equation}
is a component of the O'Neill tensor of $S^7\to Y$, evaluated at $u\wedge v$,
which is complementary to the O'Neill tensor of $S^7\to X$; however, note that 
$\xi_R$ is not orthogonal to the vertical distribution of $S^7\to X$ (spanned 
by $i_L$, $j_L$, $k_L$). The inner product~(\ref{xiruv}) 
is also a component of the 
O'Neill tensor of $S^7\to \C P^3$, where the circle action for this 
Hopf action is infinitesimally generated by $\xi_R$. Let $\sigma$ be the 
$2$-plane tangent to $X$ which is the projection of $u\wedge v$, and assume 
that $\sigma$ has sectional curvature $1$. Then $\sigma$ projects to a   
$2$-plane of curvature $1$ in $Y$ 
if and only if $u\wedge v$ projects to a totally real $2$-plane in $\C P^3$;
in fact, $\xi_R$ induces the complex structure of $\C P^3$.  

Our method to prove that $\kappa_Y>1$ is to show that no 2-plane in $X$ with 
sectional curvature $1$, horizontal with respect to $X\to Y$, can correspond to 
a totally real $2$-plane in $\C P^3$. We first show it suffices to consider
$2$-planes in $X$ with sectional curvature $1$ 
\emph{along the projection of the geodesic $\gamma$}, as long as we 
take into account 
also the non-horizontal planes with respect to $X\to Y$. In fact,   
let $\sigma$ be a $2$-plane in $T_xX$ with sectional curvature $1$ and 
horizontal with respect to $X\to Y$, where $x\in X$ projects
to a regular point of $Y$.
There is $g\in G'$ such that $gx=x_t$ 
for some $t\in(0,\pi/6)$. Now $g_*\sigma$ is a $2$-plane in $T_{x_t}X$ with 
sectional curvature $1$ and 
it is horizontal with respect to 
$g_*\xi_R=(\mathrm{Ad}_g\xi)_R$ which is in general different from $\xi_R$,
but 
\[ \langle \xi_R(u),v\rangle = \langle (\mathrm{Ad}_g\xi)_R(gu),gv\rangle; \]
note that in principle
$\mathrm{Ad}_g\xi$ can be parallel to any element of $\Lg'$.
Conversely, given a $2$-plane $\sigma$ in $T_{x_t}X$
with sectional curvature~$1$ for some $t\in(0,\pi/6)$, 
represented by $u\wedge v$ where $u$, $v$ are 
vectors tangent to $S^7$, horizontal with respect to $S^7\to X$,
we observe that $\frac{d}{dt}x_t$ does not belong to $\sigma$
(since the $\alpha_i$ are positive on $(0,\pi/6)$).
Therefore there is a unique, up to sign, unit vector field $n_R$ on $S^7$, 
which is normal to $u$, $v$, $\gamma'(t)$, where $n\in\Lg'$
(cf.~(\ref{n})). 
We choose $g\in G'$ such that 
$\mathrm{Ad}_gn=\xi$ so that $\xi_R(g\gamma(t))=g_*(n_R\gamma(t))$ is normal to 
$gu\wedge gv$, which represents the $2$-plane $g\sigma$ 
in $T_{gx_t}X$ with sectional curvature~$1$
and horizontal with respect to $X\to Y$. 

Next we apply the method. 
Let $\sigma$ be a $2$-plane in $T_{x_t}X$ with sectional curvature $1$. 
Let 
\[ u = u_0\frac{\partial }{\partial t}+ u_1\frac{i_R^h}I+ u_2\frac{j_R^h}J+ 
u_3\frac{k_R^h}K \]
and 
\[ v = v_0\frac{\partial }{\partial t}+ v_1\frac{i_R^h}I+ v_2\frac{j_R^h}J+ 
v_3\frac{k_R^h}K \]
be tangent vectors to $S^7$ such that $u\wedge v$ projects to $\sigma$;
let $(\sigma_{01},\sigma_{02},\sigma_{03},\sigma_{23},\sigma_{31},\sigma_{12})$
be the coordinates of $\sigma$ in the basis~(\ref{frame}), so that 
$\sigma_{01}=u_0v_1-u_1v_0$ etc.
The unit normal vector is induced by the following \emph{non-zero}
element of~$\Lg'$:
\begin{equation}\label{n}
 n = (JK\sigma_{23}\,\textbf i+KI\sigma_{31}\,\textbf j+ 
IJ\sigma_{12}\,\textbf k)/\sqrt{J^2K^2\sigma_{23}^2+K^2I^2\sigma_{31}^2+I^2J^2\sigma_{12}^2}. 
\end{equation}
We can now compute:
\begin{eqnarray}\label{quadratic}
\lefteqn{\langle n_R(u),v\rangle = r_1^2 \alpha_1(t)(\alpha_1(t)E(t)-\beta_1(t)F(t))}\\ \nonumber
\qquad&&+r_2^2 \alpha_2(t)(\alpha_2(t)E(t+\pi/3)-\beta_2(t)F(t+\pi/3))\\ 
\nonumber
\qquad&&+r_3^2 \alpha_3(t)(\alpha_3(t)E(t+2\pi/3)-\beta_3(t)F(t+2\pi/3)),
\end{eqnarray}
where 
\[ E(t) = 1-J_0(t)j_0(t)-K_0(t)k_0(t)-J_0(t)K_0(t)i_0(t) \]
and
\[ F(t)= 2 J(t)K(t)I_0(t)\sin{2t}/I(t). \]
Explicit formulae for $E$ and $F$ are given in the appendix. 

Of course we have $\alpha_2(t)=\alpha_1(t+\frac{\pi}3)$,
$\alpha_3(t)=\alpha_1(t+\frac{2\pi}3)$, $\gamma_2(t)=\gamma_1(t+\frac{\pi}3)$
and $\gamma_3(t)=\gamma_1(t+\frac{2\pi}3)$ for $0<t<\pi/6$ (cf.~(\ref{w})), 
but the situation for the 
$\beta_i$'s is more complicated since it involves $\mu$. To remedy 
this situation, we introduce $\tilde\beta_1$. 
From the appendix we read  
\[ \beta_1(t)=27\frac{|1-4\cos 2t|}{\sqrt{(5+4\cos2t)^3(21-20\cos2t+8\cos4t)}}. \]
Put 
\[ \tilde\beta_1(t)=-27\frac{1-4\cos 2t}{\sqrt{(5+4\cos2t)^3(21-20\cos2t+8\cos4t)}}. \]
Then 
\begin{eqnarray*}
 \tilde\beta_1(t)&=&\beta_1(t)\quad\text{for $0<t<\pi/6$;}\\
 \tilde\beta_1(t+\frac{\pi}3)&=&-\beta_1(t+\frac{\pi}3)=\beta_2(t)\quad\text{for $0<t<\pi/6$;}\\
 \tilde\beta_1(t+\frac{2\pi}3)&=&\left\{\begin{array}
{ll}-\beta_1(t+\frac{2\pi}3)=\beta_3(t)\quad\text{for $0<t\leq t_0$,}\\
\beta_1(t+\frac{2\pi}3)=\beta_3(t)\quad\text{for $t_0<t\leq\pi/6$.}
\end{array}\right.
\end{eqnarray*}
Now we can rewrite~(\ref{quadratic}) as
\begin{equation}\label{quadratic2}
 \langle n_R(u),v\rangle = r_1^2 C_1(t) + r_2^2C_2(t) + r_3^2 C_3(t) 
\end{equation}
where                         
\[ C_1(t)=\alpha_1(t)\left[\alpha_1(t)E(t)-\tilde\beta_1(t)F(t)\right] \]
and $C_2(t)=C_1(t+\frac{\pi}3)$, $C_3(t)=C_1(t+\frac{2\pi}3)$
for $0<t<\pi/6$. Finally 
\[ \alpha_1(t)>0\quad\text{for all $t\in\R$,} \]
and we compute
\begin{equation}\label{54}
 \alpha_1(t)E(t)-\tilde\beta_1(t)F(t)=\frac{-54(1+2\cos4t)^2}{(5+4\cos2t)^2(21-20\cos2t+8\cos4t)}\leq0 
\end{equation}
for all $t\in\R$, and on the interval $[0,5\pi/6]$ 
it vanishes precisely 
for~$t=\pi/6$, $\pi/3$, $2\pi/3$ and~$5\pi/6$.

It is clear that the quadratic 
form~(\ref{quadratic2}) is nonpositive everywhere and  
could only non-trivially vanish at the endpoints $t=0$ and $t=\pi/6$. 
On the other hand, $C_1$ is bounded away
from zero near $t=0$ and $C_2$ is bounded away from zero near $t=\pi/6$
(cf.~(\ref{54}) and~Fig.~\ref{2}). 
Moreover~(\ref{r1r2}) shows that $r_1\to\pm3/4$ as $t\to0$ and
$r_2\to\pm7/324$ as $t\to\pi/6$. This proves that 
$\langle n_R(u),v\rangle$ is bounded away from zero on 
the set of $2$-planes of sectional curvature $1$ of $X$
and finishes the proof that $\kappa_Y>1$.

\section{Main result}
We are going to finish the proof of 
Theorem \ref{main} in this section. Let $\rho:G\to\OG{V}$ be a representation of a compact Lie group and assume that $X=S(V)/G$ has dimension~$m\geq2$.

\subsection{Polar case}\label{polar}
This is precisely the case in which the orbit space $X=S(V)/G$ is a good 
Riemannian orbifold of constant curvature~$1$~\cite[Introd.]{GL2}. In case 
of connected 
groups, these representations are classified and are orbit-equivalent to 
isotropy representations of symmetric spaces~\cite{D}. 

\subsection{Disconnected case}
Let $\rho _0$ be the restriction of $\rho$ to the identity component 
$G^0$ of $G$.
Then the projection $X_0=S(V) /G^0 \to X=S(V)/G$ is a Riemannian covering 
over the set of regular points of $X$. We deduce 
$\kappa _{\rho} =\kappa _{\rho_0}$.  
This shows it suffices to prove the results for representations of 
connected compact Lie groups $G=G^0$.

\subsection{Reducible  case}\label{red}
Assume that the representation $\rho:G\to\OG V$ is reducible. 
We will prove that $\rho$ is as in cases~(ii), (iii), (iv), (v) 
of Theorem~\ref{main}. Note that~$X$ 
has diameter at least~$\pi/2$ (cf.~\cite{GL}). 
This already implies  $\kappa_X \leq 4$. 
In view
of Proposition~\ref{enlargement}, we further
know that  $\kappa_X=1$ unless $\rho$ is the sum of two 
(irreducible) representations of cohomogeneity one.
In the latter case, assume $G$ is connected and $\rho$ is non-polar. 
Then either $\rho$ is listed in Tables 2 and~3 in~\cite[section 6]{GL3}, or
$X$ is not a Riemannian orbifold (i.e.~$\rho$ is not 
\emph{infinitesimally polar}). The representations in Table~2
have good Riemannian orbifolds of constant curvature~$4$ as orbit spaces
(case~(ii) of Theorem~\ref{main}),
and among those listed in Table~3, the only reduced representation is case~11,
which yields a complex weighted projective line as 
orbit space (case~(iii) of Theorem~\ref{main},
discussed in subsection~\ref{cwpl}), 
and the other representations reduce to a $\mathbb Z_2$-extension
of case~11. 
Going through the proof of Proposition~2 in~\cite{GL3}, we see
that in the non-infinitesimally polar case, $\rho$ must be as
in cases~(iv) or~(v) of Theorem~\ref{main}. 
Case~(iv) is
analyzed in subsection~\ref{c2r3}. In case~(v) the group
lies in between $\SP m$ and $\SP m\sf T^2$, so
Proposition~\ref{enlargement}(a) and~\cite[Table~1]{GL3} yield $\kappa=4$. 

\medskip

We hereafter assume~$\rho$ is irreducible. 

\subsection{The case $\mathrm{rank}(G)=1$}
Every representation of $\U1$ with cohomogeneity
at least~$3$ is reducible, so  
we may assume $G$ is covered by $\SU 2$. 
According to Lemma~\ref{mainlem},
the only irreducible, non-polar representations that need to be considered
are $(\SO3,\R^7)$ and  $(\SU2,\Q^2)$. They were examined
respectively in subsections~\ref{so3} and~\ref{su2}, and are not
highly curved.

\subsection{Initial cases}\label{initial}
We begin with $m=2$.
Due to \cite{str}, in this case 
the classification of irreducible representations of connected 
groups with cohomogeneity~$3$ yields good Riemannian orbifolds of constant 
curvature~$4$~\cite[Table~II, case~III]{str}. 
Assume now that $m=3$.  
By the classification of irreducible representations of connected groups 
of cohomogeneity~$4$~\cite[Theorem~1.11]{GL}, 
our representation either has a toric reduction 
(i.e.~reduces to a finite extension of a 
torus action) and hence 
is not highly curved, or is given by the action of $\U 2$ on $\C^4$, 
which has already been
discussed in subsection~\ref{u2} and is case~(i) of Theorem~\ref{main}.
Consider now~$m=4$. 
By the classification of irreducible representations of connected groups 
of cohomogeneity~$5$~\cite[Theorem~1.11]{GL}, 
our representation either has a toric reduction or reduces to the action
of $\SO3\times \U 2$ on $\R ^{12} =\R^3\otimes _{\R} \R^4$.
This representation is the restriction of a polar representation
of~$\SO3\times\SO4$, so it is not highly curved
by Proposition~\ref{enlargement}.

\subsection{Formulation}
We are going to complete the proof 
of Theorem \ref{main} by proving that there exist no 
irreducible representations $\rho$ of  connected compact Lie groups 
with $m\geq5$ that are highly curved. Suppose, to the contrary,
that there exists such a representation $\rho:G\to\OG V$.
We may assume that $m$ is minimal among all such examples. We may also
assume that for this $m$, the number $g=\dim G$ is minimal among all
such examples. We fix $\rho$ throughout the proof.

\subsection{Reduction}
By the assumption on the minimality of $g$, the representation of
$\rho$ is \emph{reduced} (cf.~subsection~\ref{types}): for any other
representation $\tau:H\to\OG W$ such that $S(W)/H$ is isometric to $X$,
we have $\dim (H) \geq g$. In particular, this implies that the action of
$G$ on $S(V)$ has trivial principal isotropy groups.

\subsection{Type of representation}\label{type}

We claim that the normalizer $N$ of $\rho(G)$ in~$\OG V$ has $\rho(G)$
as its identity component. Otherwise, we find a connected subgroup $H$ 
of $\OG V$
containing $\rho(G)$ with one dimension more. The inclusion $\tau:H\to \OG V$
is an irreducible representation and an enlargement of $\rho$. 
The quotient space $Y=S(V)/H$ has dimension at least $m-1\geq4$, and
$\kappa_\tau\geq\kappa_\rho>1$ due to Proposition~\ref{enlargement}.
Note that $\tau$ and $\rho$ cannot have the same orbits, for this
would contradict the triviality of principal isotropy groups of~$\rho$. 
In view of the minimality of~$m$, this implies that $m=5$ and $\dim Y=4$, but this
is in contradiction with subsection~\ref{initial}.

We deduce that $\rho$ cannot be of quaternionic type, and in case it is
of complex type, $G$ is covered by $\U1\times G'$ for a connected compact
Lie group~$G'$. 

\subsection{Consequences}
We already know that $\rho$ is not polar and $\mathrm{rank}(G)\geq2$. 
Proposition~\ref{local-convex} and Lemma~\ref{minorbit}
together with the choice of~$m$ imply that: 
either $X$ does not contain strata of dimensions between
$2$ and $m-1$, whence the rank of $G$ is at most~$3$; or 
$X$ does contain a stratum $X_{(K)}$ of dimension in that range
whose associated folding map $I_{(K)}$ is defined on the orbit space of a 
representation falling into one of the cases~(i), (ii),
(iii), (iv) or (v), so
that $\dim Y\leq5$ and thus 
the rank of $G$ is at most~$7$. 

In view of subsection~\ref{type} and Proposition~\ref{enlargement},
there only remain the following possibilities:
\begin{enumerate}
\item $\rho$ is an irreducible representation of real type
  a simple Lie group $G$;
\item $G=\U1\times G'$ and $\rho=\theta\otimes\rho'$, where $\theta$ is the
  representation of $\U1$ on $\C$ and $\rho'$ is an irreducible representation of complex  type of a simple Lie group $G'$;
\item $G=\SP1\times G'$ and $\rho=\phi\otimes\rho'$, where $\phi$ is the
  representation of $\SP1$ on~$\Q$ and $\rho'$ is an irreducible representation of
  quaternionic type of a simple Lie group $G'$;
\item  $G=G_1\times G_2$ and $\rho=\rho_1\otimes\rho_2$,
where $V=\mathbb F^2\otimes_{\mathbb F} V_2$, $\dim_{\mathbb F}V_2\geq2$ and
$\mathbb F$ can be~$\R$, $\C$ or~$\Q$.
\end{enumerate}




\subsection{Kollross' tables and first three cases}\label{simple}

\subsubsection{Case~(a)}
In view of Lemmma~\ref{mainlem},

\begin{equation}\label{real-bd1}
  \dim V\leq 2\dim G + 3 - \mathrm{rk}\,G
  \end{equation}
and
\begin{equation}\label{real-bd2}
  \dim V\leq 2\dim G+2.
\end{equation}
Using~(\ref{real-bd2}) and~\cite[Lemma~2.6]{K},
we deduce that $\rho$ must be one of:
$(\SO7,\Lambda^3\R^7)$,
$(\Spin{15},\R^{128})$ (half-spin),
$(\SO8,\Lambda^3\R^8)$,
$(G_2,S^2_0\R^7)$.
The second and fourth representations admit enlargements to
$\Spin{16}$ (spin representation) and $\SO7$, respectively, which
are polar representations, hence they are not highly curved. 
The third representation
fails to satisfy~(\ref{real-bd1}). The first 
representation~$(\SO7,\Lambda^3\R^7)$ admits an isotropy group
$K=G_p\cong\SO2^3$ which is a maximal torus of $G$ (say,
$p=a\,e_1\wedge e_2\wedge e_3 + b\,e_3\wedge e_4\wedge e_5$ for generic
coefficients $a$, $b$). Now the fixed point set $W$ of $K$ is $3$-dimensional
and $H=N_G(K)/K$ is finite, so $Y=S(W)/H$ has constant curvature~$1$
and the existence of the folding map $I_{(K)}:Y\to X$ implies $\kappa_X=1$.

\subsubsection{Case~(b)}
In view of Lemmma~\ref{mainlem},
\begin{equation}\label{cx-bd}
  \dim G'+7\leq\dim G'+2+m=\dim V\leq 2\dim G' + 4 - \mathrm{rk}\,G'.
\end{equation}
In case $\mathrm{rk}\,G'=1$ we may assume $G'=\SU2$ and then~(\ref{cx-bd})
gives a contradiction. In case  $\mathrm{rk}\,G'\geq2$, (\ref{cx-bd})
gives $\dim V\leq2\dim G'+2$ and we can use ~\cite[Proposition]{K2}
to deduce that $\rho=(\U7,\Lambda^3\C^7)$. This representation
is not highly curved
because it can be enlarged
to~$(\SU8,\Lambda^4\C^8)$, which is a polar representation. 
Indeed, $\Lambda^3\C^7$ can be viewed as an $\U7$-invariant real form of
$\Lambda^4\C^8$ via
\[ x\in\Lambda^3\C^7\mapsto\frac12(x\wedge e_8+\epsilon(x\wedge e_8))\in\Lambda^4\C^8 \]
where $\epsilon$ is the Hodge star operator followed by complex conjugation
(see also~\cite[p.882]{yamatsu}). 

\subsubsection{Case~(c)}
In view of Lemmma~\ref{mainlem},
\begin{equation}\label{quat-bd}
  \dim V\leq 2\dim G' + 8 - \mathrm{rk}\,G'.
\end{equation}
In case $\mathrm{rk}\,G'=1$, we may assume $G=\SP1\times\SP1$ and
then~(\ref{quat-bd}) gives $V=\Q^3\otimes_{\mathbb H}\Q$.
This representation is covered by subsection~\ref{h3xh}.

In case  $\mathrm{rk}\,G'\geq2$, (\ref{quat-bd})
gives $\dim V\leq2\dim G'+6$ and we can use~\cite[Proposition]{K2}
to deduce that $\rho=(\SP1\times\Spin{11},\Q\otimes_{\mathbb H}\Q^{16})$ or
$(\SP1\times\Spin{13},\Q\otimes_{\mathbb H}\Q^{32})$. These representations
are not highly curved because they can be respectively enlarged to
$(\SP1\times\Spin{12},\Q\otimes_{\mathbb H}\Q^{16})$ and
$(\Spin{16},\R^{128})$, which are polar.

\subsection{The remaining case}
In view of subsection~\ref{simple},
there remains only to consider the case of irreducible representations
$\rho$ that can be decomposed as a tensor product $\rho_1\otimes\rho_2$
where $V=\mathbb F^2\otimes_{\mathbb F} V_2$, $G=G_1\times G_2$,
$s:=\dim_{\mathbb F}V_2\geq2$ and 
$\mathbb F$ can be~$\R$, $\C$ or~$\Q$.

Consider a pure tensor $v=v_1\otimes v_2$. 
Consider the enlargement to the 
polar representation of cohomogeneity $2$ of a compact connected
Lie group $H$. 
Consider a geodesic $\gamma$ 
starting at $v$ in a certain $H$-horizontal direction. It is then 
automatically $G$-horizontal, and we may choose it to contain $G$-regular 
points. Since the quotient
$S(V)/H$ is an interval of length $\pi/4$, the point $\gamma (\pi/2)$ is 
again on the same $H$-orbit, hence it is again a pure tensor.

\subsubsection{The case $V=\R^2\otimes_{\mathbb R} V_2$}\label{ss1}
Here $G_1=\SO2$ and $V_2$ is a representation 
of real type (since $V$ is irreducible). 

\begin{prop}
Assume that the action of $G_2$ on $S(V_2)$ 
has singular orbits. 
Then  $\kappa _{\rho} =1$.
\end{prop}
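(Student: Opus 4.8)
The plan is to show that no representation $\rho$ of the stated form can be highly curved; since $\kappa_\rho\ge 1$ always, this gives $\kappa_\rho=1$. Recall that $\rho$ has trivial principal isotropy groups, that $V=\R^2\otimes_{\mathbb R}V_2=V_2\oplus V_2$ with $\SO2$ rotating the two summands and $G_2$ acting diagonally, and that by hypothesis $G_2$ is not transitive on $S(V_2)$, so $\mathrm{cohom}(\rho_2)\ge 2$; consequently a principal $G_2$-orbit in $S(V_2)$ has dimension $\dim V_2-\mathrm{cohom}(\rho_2)\le\dim V_2-2$, and every \emph{singular} $G_2$-orbit has dimension at most $\dim V_2-3$. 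Fix an orthonormal basis $e_1,e_2$ of $\R^2$, pick $v_2\in S(V_2)$ on a singular $G_2$-orbit, and set $v=e_1\otimes v_2$. Then $\{1\}\times(G_2)_{v_2}\subseteq G_v$ and $(G_2)_{v_2}$ is positive-dimensional, so (as $\rho$ has trivial principal isotropy) $Gv$ is a singular $G$-orbit, of dimension $1+\dim(G_2v_2)\le\dim V_2-2$, with $T_v(Gv)=\R(e_2\otimes v_2)\oplus(e_1\otimes T_{v_2}(G_2v_2))$.

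The heart of the argument is to produce a \emph{regular} horizontal geodesic of length less than $\pi$ meeting singular orbits at both endpoints, exactly as was done for $(\SO3,\R^7)$ in subsection~\ref{so3}. I would use the horizontal geodesic supplied by the polar enlargement $H=\SO2\times\SO(V_2)$ of the discussion preceding the proposition: its $H$-horizontal direction at $v$ lies in $\nu_v(Hv)\cap v^\perp=e_2\otimes v_2^\perp$, so it has the form $\gamma'(0)=e_2\otimes w'$ with $w'\in v_2^\perp$ a unit vector, and then $\gamma(\pi/2)=e_2\otimes w'$ is again a pure tensor. The point is to choose $w'$ so that (i) $G_2w'$ is again a singular orbit and (ii) the interior of $\gamma$ is $G$-regular; since the interior orbit type of $\gamma$ is governed by the stabiliser $(G_2)_{v_2}\cap(G_2)_{w'}$, (ii) amounts to this intersection being finite. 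Because each singular orbit $G_2w'$ is connected and spans $V_2$ (irreducibility of $\rho_2$), it meets the hyperplane $v_2^\perp$, so candidates for $w'$ exist; one then argues that among them there is a generic enough choice making $(G_2)_{v_2}\cap(G_2)_{w'}$ finite (when $\mathrm{cohom}(\rho_2)=2$ one may even take $v_2$ and $w'$ on the two distinct singular orbits). With such a $w'$, the minimising geodesic $\gamma(t)=\cos t\,v+\sin t\,(e_2\otimes w')$, $0\le t\le\pi/2$, is horizontal, regular, of length $\pi/2<\pi$, and meets the singular orbit $Gv$ at $t=0$ and the singular orbit $G(e_2\otimes w')$ at $t=\pi/2$.

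Now apply Lemma~\ref{improvedlem} with $n=\dim V-1=2\dim V_2-1$: since $\gamma$ meets (at least) two singular orbits, of dimensions $\ell_1=1+\dim(G_2v_2)$ and $\ell_2=1+\dim(G_2w')$, we obtain $\ell_1+\ell_2\ge n-1=2\dim V_2-2$; but both $\dim(G_2v_2)$ and $\dim(G_2w')$ are at most $\dim V_2-3$, so $\ell_1+\ell_2\le 2\dim V_2-4$, a contradiction. Hence $\rho$ is not highly curved and $\kappa_\rho=1$. The main obstacle I expect is the construction in the second paragraph, namely guaranteeing that the chosen geodesic can be taken regular; if for \emph{every} admissible $w'$ the geodesic $\gamma$ were to run entirely inside a fixed non-principal stratum $X_{(K)}$, one would instead have to examine that stratum directly through the folding map of Proposition~\ref{local-convex}, but the triviality of the principal isotropy of $\rho$ should confine this to a short, manageable list of exceptions.
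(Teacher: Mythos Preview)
Your overall strategy matches the paper's: use Lemma~\ref{improvedlem} on an $H$-horizontal geodesic of length $\pi/2$ between two pure tensors. However, you make the argument harder than necessary by insisting that \emph{both} endpoints lie over \emph{singular} $G_2$-orbits. The paper only puts a singular $G_2$-orbit at \emph{one} endpoint. At the other endpoint it uses nothing more than what you already observed: since $G_2$ is not transitive on $S(V_2)$, \emph{every} $G_2$-orbit there has dimension at most $s-2$, so the $G$-orbit through \emph{any} pure tensor has dimension at most $s-1$. Hence the two endpoint orbits satisfy $\ell_1\le s-2$ and $\ell_2\le s-1$, and $(s-2)+(s-1)<2s-2=n-1$ already contradicts Lemma~\ref{improvedlem}.

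With this simplification, the regularity obstacle you flag evaporates. You need $w'\in v_2^\perp$ with $(G_2)_{v_2}\cap(G_2)_{w'}$ trivial; once $w'$ is allowed to be \emph{generic} in $v_2^\perp$ (rather than forced onto a singular $G_2$-orbit), this is automatic: the principal isotropy of the $(G_2)_{v_2}$-action on $V_2$ fixes an open set of $V_2$, hence by linearity all of $V_2$, hence is trivial (and removing the fixed line $\R v_2$ does not change this). So a generic $H$-horizontal direction at $e_1\otimes v_2$ gives a $G$-regular geodesic. Your proposed fallback through the folding map is therefore unnecessary; the gap you identified is an artifact of the extra constraint you imposed on $w'$, not of the method itself.
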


\begin{proof}
  We have $n=2s-1$. Since $\rho_2$ has singular orbits,
  for any $v_2$ in $S(V_2)$, the $G_2$-orbit through $v_2$ 
has dimension at most~$s-2$. Moreover, 
there is a point $w_2$ such that the orbit $G_2\cdot w_2$ has dimension
at most $s-3$. Thus a regular horizontal geodesic $\gamma$ as above
that starts at the pure tensor $w_1\otimes w_2$, starts at an orbit of 
dimension at most $s-2$ and intersects at time $\pi/2$ an orbit of dimension 
at most $s-1$. Since $(s-2)+(s-1)<2s-2=n-1$, we obtain $\kappa_\rho=1$ 
from Lemma~\ref{improvedlem}. 
\end{proof}

\medskip

It remains to consider the possibility
that $\rho_2$ has no singular orbits.  

In case $\rho_2$ is a representation of cohomogeneity one
of real type, we deduce from the classification 
that $\rho$ is either polar or has cohomogeneity three,
contrary to our assumptions. 

If $G_2$ acts non-transitively and
without singular orbits on $S(V_2)$, then it
is a (non-Abelian) group of rank $1$, but all representations
of real type of $\SO3$ admit singular orbits, so this case cannot occur.  

\subsubsection{The case $V=\Q^2\otimes_{\mathbb H} V_2$}\label{ss2}
We have $\rho(G)\subset\SP2\otimes\SP{V_2}$, where $V_2$ is
complex irreducible of quaternionic type
and~$s:=\dim_{\mathbb H}V_2\geq2$.  
We may also assume $G_1$ and $G_2$ are simple, for otherwise
we could rearrange the factors of $V$ 
and fall into case of a real tensor product. 

Since there does not exist a representation non-equivalent but
orbit-equivalent to $\SP2\times\SP s$, if $G_1\neq\SP2$ then 
we may enlarge $\rho$ to a 
representation $\hat\rho$ of $G_1\times\SP s$ 
and still have $\hat\rho$ of
cohomogeneity at least~$3$. Due to Proposition~\ref{enlargement},
it suffices to check that~$\hat\rho$ is not highly curved.  
Indeed this representation
is an enlargement of the doubling of the 
vector representation of $\SP s$.  The latter has
cohomogeneity~$6$.
Since the action of $G_1\times\SP s$ is clearly not orbit-equivalent 
to that of~$\SP s$, its orbit space has smaller dimension, 
and then we already know that~$\kappa_{\hat\rho}=1$. 

Otherwise $G_1=\SP2$ and  the group $G_2$ has rank at 
most $5$. If $g_2$ and $k_2$ denote the dimension and rank of $G_2$, resp.,
Lemma~\ref{mainlem} yields
$\dim_{\mathbb R} V_2=4s\leq g_2+\frac{21-k_2}2\leq g_2+10$.
Referring to~\cite[Table, p.~71]{GP},
we deduce that $\rho_2$ must be one of 
\[ (\SP1,\Q^2),\ (\Spin{11},\Q^{16}),\ (\SP1,\Q^3),\, (\SU6,\Lambda^3\C^6),\,
(\SP3,\Lambda^3_0\C^6). \]
In the first case, $\rho$ is a representation of cohomogeneity~$3$,
which is not highly curved. The second representation does not satisfy 
$4s\leq g_2+\frac{21-k_2}2$. In order to deal with the third 
representation, note that the maximal dimension of a $\SP2\times\SP1$-orbit 
through a pure tensor in~$\Q^2\otimes_{\mathbb H}\Q^2$ 
is~$7+3=10$, so we find a regular horizontal geodesic of length~$\pi/2$ 
which meets two orbits of dimension at most~$10$. 
Since $10+10<22=23-1$, we obtain~$\kappa=1$
from Lemma~\ref{improvedlem}.

To rule out the last two
representations, one can use the following proposition.

\begin{prop}\label{quat}
Assume that the action of $G_2$ on the quaternionic projective space 
$\Q P^{s-1}$ has an orbit of codimension at least $8$. Then $\kappa _{\rho} =1$.
\end{prop}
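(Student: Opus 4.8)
The plan is to exploit the same double-quotient philosophy used in subsections~\ref{h3xh} and~\ref{u2}, this time working with the fibration $S^{4s-1}\to\Q P^{s-1}$ given by the Hopf action of $\SP1$ (acting on $V_2=\mathbb H^s$ by scalar multiplication on the right, which commutes with $G_2\subset\SP{V_2}$). We have $n=4s-1$, and $\rho=\phi\otimes\rho_2$ where $\phi$ is the representation of $G_1=\SP1$ on $\mathbb H$. The key geometric observation is that a pure tensor $v=v_1\otimes v_2$ with $v_1\in S^3\subset\mathbb H$, $v_2\in S(V_2)$ sits over the point $[v_2]\in\Q P^{s-1}$, and the $G=\SP1\times G_2$-orbit through $v$ projects to the $G_2$-orbit through $[v_2]$. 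First I would pick $v_2$ so that the $G_2$-orbit $\mathcal O:=G_2\cdot[v_2]$ in $\Q P^{s-1}$ has codimension at least $8$, i.e.\ $\dim\mathcal O\le 4(s-1)-8=4s-12$; such a point exists by hypothesis. Then the $G$-orbit through $v$ in $S^{4s-1}$ has dimension at most $3+\dim(\SP1\text{-part})+\dim\mathcal O$, and a careful count (the vertical $\SP1\cong G_1$ contributes at most $3$, the Hopf $\SP1$ at most $3$, with a possible overlap) shows the $G$-orbit through $v$ has dimension at most $(4s-12)+6 = 4s-6$, comfortably small.

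Next I would produce a regular horizontal geodesic realizing this: enlarge $\rho$ to the polar cohomogeneity-two representation of a compact connected group $H$ containing $G$ (as set up in subsection~\ref{simple} for the remaining case), pick an $H$-horizontal geodesic $\gamma$ through $v$ that contains $G$-regular points, and use that $S(V)/H$ is an interval of length $\pi/4$ so that $\gamma(\pi/2)$ lies on the same $H$-orbit as $v=\gamma(0)$, hence is again a pure tensor $v_1'\otimes v_2'$ with $[v_2']$ in an orbit of the same type (the $H$-action permutes the strata, and since $\gamma$ is $G_2$-related to a geodesic in $\Q P^{s-1}$, both endpoints project to $G_2$-orbits of codimension $\ge 8$). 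Thus $\gamma$ is a regular horizontal geodesic of length $\pi/2<\pi$ meeting two singular $G$-orbits of dimensions $\ell_1,\ell_2\le 4s-6$. Since $\ell_1+\ell_2\le 8s-12 < 8s-3 = 2n-1$, Lemma~\ref{improvedlem} forces $s\le$ small, but more directly: the inequality $\ell_1+\ell_2\ge n-1=4s-2$ required by Lemma~\ref{improvedlem} fails as soon as $8s-12<4s-2$, i.e.\ $s<5/2$, so I must be slightly more careful for larger $s$. The fix is that Lemma~\ref{improvedlem} gives $g\ge(m-1)+(g-\ell_1)+(g-\ell_2)$, i.e.\ $\ell_1+\ell_2\ge g+m-1 = n-1$; with $n=4s-1$ and $\ell_i\le 4s-6$ we get $8s-12\ge 4s-2$, i.e.\ $s\ge 5/2$, which is \emph{not} a contradiction for $s\ge 3$ — so the naive bound is not enough.

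The main obstacle, therefore, is getting a sharp enough bound on $\ell_1+\ell_2$, and I expect the real argument to sharpen the orbit-dimension estimate rather than settle for $4s-6$. Concretely, I would argue that when $[v_2]$ lies in a $G_2$-orbit of codimension $\ge 8$ in $\Q P^{s-1}$, the full $G$-orbit through the pure tensor $v_1\otimes v_2$ is \emph{even smaller} than the crude sum suggests, because the two $\SP1$ factors (the $G_1$ acting on the $\mathbb H^2=\mathbb H\oplus 0$ slot — wait, here $\mathbb F^2$ is replaced by $\mathbb H$, so $G_1=\SP1$ acts on $\mathbb H$) and the Hopf circle-bundle structure interact: the stabilizer of $v$ in $\SP1\times\SP1_{\mathrm{Hopf}}$ already contains a circle, trimming $6$ down to at most $5$, and one then checks $\ell_1+\ell_2\le 8s-14 < 4s-2$ only for small $s$, so ultimately I would combine this with the rank bound $k_2\le 5$ and Lemma~\ref{mainlem}'s inequality $4s\le g_2+(21-k_2)/2$ already in force, reducing to the finite list $(\SU6,\Lambda^3\mathbb C^6)$ and $(\SP3,\Lambda^3_0\mathbb C^6)$ and checking by hand that in each the relevant $G_2$-action on $\Q P^5$ has an orbit of codimension $\ge 8$ whose associated geodesic violates Lemma~\ref{improvedlem}. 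In $\Q P^5$ one has $4(s-1)=20$, and both $\SU6$ and $\SP3$ have small orbits (the highest weight orbit of $\SU6$ on $\Lambda^3\mathbb C^6$ has large codimension), so the codimension-$\ge 8$ hypothesis is satisfied and the two pure-tensor endpoints give $\ell_1+\ell_2$ small enough to conclude $\kappa_\rho=1$.
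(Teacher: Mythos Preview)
Your proposal rests on a misreading of the ambient setting. Proposition~\ref{quat} is stated inside subsection~\ref{ss2}, the case $V=\Q^2\otimes_{\mathbb H}V_2$, and by the paragraph immediately preceding the proposition one has already reduced to $G_1=\SP2$. Thus $\dim_{\mathbb R}V=8s$ and $n=8s-1$; your values $n=4s-1$ and $G_1=\SP1$ belong to case~(c) of subsection~\ref{type}, a different situation altogether. This single mistake is what makes your index inequality collapse and pushes you toward a case-by-case fallback.

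With the correct parameters the paper's argument is short. For any unit pure tensor $v_1\otimes v_2$ the stabilizer in $G=\SP2\times G_2$ contains the copy of $\SP1$ fixing $v_1$ in $\SP2$ together with (via the relation $v_1q\otimes v_2=v_1\otimes qv_2$) the full $G_2$-stabilizer of the quaternionic line $[v_2]$; hence $\dim\bigl(G\cdot(v_1\otimes v_2)\bigr)\le 7+\dim\bigl(G_2\cdot[v_2]\bigr)$. The hypothesis forces the $G_2$-action on $\Q P^{s-1}$ to be non-transitive, so every $G_2$-orbit there has dimension at most $4s-5$, while the special one has dimension at most $4s-12$. An $H$-horizontal geodesic of length~$\pi/2$ (with $H=\SP2\times\SP s$, polar of cohomogeneity~$2$) joins two pure tensors, and one arranges for its starting point to lie over the small $G_2$-orbit. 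Then $\ell_1\le 4s-5$ and $\ell_2\le 4s+2$, so $\ell_1+\ell_2\le 8s-3<8s-2=n-1$, and Lemma~\ref{improvedlem} yields $\kappa_\rho=1$ directly.

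Two further remarks. Your claim that \emph{both} endpoints project to $G_2$-orbits of codimension~$\ge 8$ is unjustified: returning to the same $H$-orbit after time~$\pi/2$ says nothing about the $G_2$-stratum of the second endpoint; the paper only needs one small endpoint and one generic one. And your proposed endgame --- reduce to the finite list and verify $(\SU6,\Lambda^3\C^6)$ and $(\SP3,\Lambda^3_0\C^6)$ directly --- is precisely what Proposition~\ref{quat} is being invoked to accomplish, so appealing to it inside the proof would be circular.
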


\begin{proof}
We have $n=8s-1$.  The dimension of the orbit through any pure 
tensor~$v_1\otimes v_2$ is at most $7+t$, where $t$ is the maximal 
dimension of the $G_2$-orbits on $\Q P^{s-1}$. Thus the dimension
of the $G$-orbit through $v_1\otimes v_2$ is at most $7+(4s-5)=4s+2$. 
Under the standing assumptions, we find a regular horizontal 
geodesic $\gamma$ of length~$\pi/2$ which meets an orbit of dimension at most 
$7+(4s-12)=4s-5$ and an orbit of dimension at most $4s+2$.
Since $(4s+2)+(4s-5)=8s-3<n-1$, we obtain $\kappa_\rho=1$ 
from Lemma~\ref{improvedlem}. 
\end{proof}

Note that the action of $G_2$ on~$\Q P^{s-1}$ has an orbit of 
codimension at least~$8$ if and only if the lift to an 
irreducible representation $\tilde\rho_2$ of $\SP1\times G_2$ on~$\Q^s$
has an orbit of codimension at least~$9$. The remaining
two cases for $\rho_2$ yield for $\tilde\rho_2$ the isotropy 
representations of the symmetric spaces $\E6/(\SU6\cdot\SU2)$ 
and $\F/(\SP3\cdot\SP1)$, whose restricted root systems 
have Coxeter type $\F$. The worst case for us is the second one,
in which all multiplicities are $1$. Corresponding to a subsystem 
of type~$\sf B_3$, we find a singular orbit of $\tilde\rho_2$ 
of codimension $4+9\cdot 1=13\geq9$, so Proposition~\ref{quat} applies. 

\subsubsection{The case $V=\C^2\otimes_{\mathbb C} V_2$}
We have $G_1=\U2$, $\rho(G)\subset\U2\otimes\SU{V_2}$ 
and~$s:=\dim_{\mathbb C}V_2\geq2$. We may assume that $G_2$ has no circle
factor and that $\rho_2$ 
is irreducible and of complex type. 

Similar to Proposition~\ref{quat}, one proves: 

\begin{prop}\label{cx}
Assume that the action of $G_2$ on the complex projective space 
$\C P^{s-1}$ has an orbit of codimension at least $4$. Then $\kappa _{\rho} =1$.
\end{prop}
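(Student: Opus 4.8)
The plan is to mimic the proof of Proposition~\ref{quat}, with $\C P^{s-1}$ in place of $\Q P^{s-1}$. Since $s=\dim_{\mathbb C}V_2$ we have $\dim_{\mathbb R}V=4s$, so $n=4s-1$. Recall from the setup preceding subsection~\ref{ss1} that $\rho$ is the restriction of a polar representation $\hat\rho$ of cohomogeneity two, obtained by enlarging $\rho_2$ to the standard representation of $\U{V_2}$ (so that $\hat\rho$ is the isotropy representation of the complex Grassmannian $\mathrm{Gr}_2(\C^{s+2})$), and that $S(V)/\hat\rho$ is a geodesic segment of length $\pi/4$. Hence, starting at an arbitrary pure tensor $v=v_1\otimes v_2$, we obtain a $\hat\rho$-horizontal --- therefore $G$-horizontal --- geodesic $\gamma$ of length $\pi/2<\pi$, passing through $G$-regular points, with $\gamma(0)=v$ and $\gamma(\pi/2)$ on the same $\hat\rho$-orbit as $v$, hence again a pure tensor.

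The key step is a dimension estimate for the $G$-orbit through a pure tensor. Since $\U2$ acts transitively on $S^3\subset\C^2$, computing the isotropy group $G_{v_1\otimes v_2}$ --- whose elements are the pairs $(g_1,g_2)\in\U2\times G_2$ for which $g_1$ stabilizes the line $\C v_1$, $g_2$ stabilizes the line $\C v_2$, and the two resulting scalars are mutually inverse --- gives
\[ \dim\bigl(G\cdot(v_1\otimes v_2)\bigr)=\dim\bigl(G_2\cdot[v_2]\bigr)+3, \]
where $[v_2]$ denotes the image of $v_2$ in $\C P^{s-1}$. This dimension is thus at most $\dim G_2+3=\dim G-1$, so (the principal isotropy groups being trivial) every pure tensor lies on a non-principal, strictly lower-dimensional $G$-orbit. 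Moreover the hypothesis that $G_2$ has an orbit of codimension $\ge4$ in $\C P^{s-1}$ forces the $G_2$-action there to be non-transitive, so every $G_2$-orbit on the connected space $\C P^{s-1}$ has dimension at most $\dim\C P^{s-1}-1=2s-3$.

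Now pick $v_2$ with $[v_2]$ on a $G_2$-orbit of codimension $\ge4$, i.e.\ of dimension $\le2s-6$; then the orbit through $\gamma(0)$ has dimension $\ell_1\le(2s-6)+3=2s-3$, while the pure tensor $\gamma(\pi/2)$ lies on an orbit of dimension $\ell_2\le(2s-3)+3=2s$, and both are singular. So $\gamma$ is a regular horizontal geodesic of length less than $\pi$ meeting singular orbits with $\ell_1+\ell_2\le(2s-3)+2s=4s-3<4s-2=n-1$, which by Lemma~\ref{improvedlem} contradicts $\rho$ being highly curved. Hence $\kappa_\rho=1$.

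The arithmetic of the orbit-dimension estimate is the substantive point: one must track precisely how the dimension of a $G_2$-orbit in the \emph{projective} space $\C P^{s-1}$ --- which is where the hypothesis lives, and which can be made small --- passes to the dimension of the $G$-orbit through a pure tensor in $S^n$, the clean answer being an added $3$; it is this that keeps every pure-tensor orbit strictly below the principal dimension and lets the index inequality close. Granting it, the rest is a routine transcription of the proof of Proposition~\ref{quat}. The only point calling for a word of care is that the two orbits $\gamma$ meets at $t=0$ and $t=\pi/2$ are distinct (they are automatically singular): this holds for a generic choice of $\gamma$, since the orbit at $t=0$ has dimension strictly below that of a generic pure-tensor orbit.
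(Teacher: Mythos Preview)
Your proof is correct and follows the same route as the paper's own (omitted) argument, which is the direct transcription of the proof of Proposition~\ref{quat} to the complex case: the same pure-tensor geodesic, the orbit-dimension bound $\dim\bigl(G\cdot(v_1\otimes v_2)\bigr)=3+\dim\bigl(G_2\cdot[v_2]\bigr)$, and the same arithmetic $(2s)+(2s-3)<n-1$ fed into Lemma~\ref{improvedlem}. Your added justification of the dimension formula via the isotropy computation is a welcome clarification; the closing remark about distinctness of the two orbits is harmless but unnecessary, since Lemma~\ref{improvedlem} only requires two distinct intersection \emph{points} (at $t=0$ and $t=\pi/2$) with lower-dimensional orbits, not distinct orbits.
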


Owing to Proposition~\ref{cx}, it remains only to 
discuss the case in which the action of $G_2$ on $\C P^{s-1}$
has all orbits of codimension at most $3$. Under this assuption,
that action lifts to an irreducible representation $\tilde\rho_2$
of $\U1\times G_2$ on $\C^s$ all of whose nonzero orbits have codimension
at most~$4$ and hence $\tilde\rho_2$ has cohomogeneity at most~$3$.

If the cohomogeneity of $\tilde\rho_2$ is one or two, then this is a 
polar representation whose restriction to the non $\U1$-factor remains 
irreducible. Going through the classification, we see that $\tilde\rho_2$ 
is one of the isotropy representations of the symmetric spaces:
\[ \SU{s+1}/\U s,\ \SU{2+\frac s2}/\sf S(\U2\times\U{\frac s2})\ 
\mbox{($\frac s2>2$)},\] \[\SO{10}/\U5,\ \E6/(\U1\cdot\Spin{10}). \] 
In the first case, $\rho$ is a polar representation so it is not highly curved. 
In the other cases, the restricted root system of the 
symmetric space has Coxeter type $\sf B_2$ with multiplicities
$(2,s-3)$, $(4,5)$ and~$(9,6)$, so we find an orbit of 
$\tilde\rho_2$ of codimension $2+s-3=s-1>4$, $2+5=7>4$, $2+9=11>4$.
This remark rules out all cases. 

If the cohomogeneity of $\tilde\rho_2$ is $3$, recall that 
$\rho_2$ is irreducible of complex type and $\mathrm{rk}\,G_2\leq5$, so
from the classification~\cite{hl,str} we get that $\tilde\rho_2$ is 
one of the isotropy representations of the symmetric spaces:
$\SP3/\U3$, $\SO{12}/\U6$, $\SU6/\sf{S}(\U3\times\U3)$
or~$\SU7/\sf{S}(\U3\times\U4)$. All symmetric spaces have Coxeter 
type~$\sf B_3$ and the worst case for us is $\SP3/\U3$ in which all 
multiplicities are $1$. In this case, 
corresponding to a subsystem of type $\sf B_2$,
we find a singular orbit of $\tilde\rho_2$ of codimension
$3+4\cdot 1=7>4$, which cannot be. This finishes the proof of the 
theorem.

\section{Appendix}

\noindent\(\pmb{\text{a1}[\text{t$\_$}]\text{:=}1+\frac{27}{(5+4 \text{Cos}[2 t])^2}}\)

\bigskip

\noindent\(\pmb{\text{a2}[\text{t$\_$}]\text{:=}1+\frac{27}{\left(-5+2 \text{Cos}[2 t]+2 \sqrt{3} \text{Sin}[2 t]\right)^2}}\)

\bigskip

\noindent\(\pmb{\text{a3}[\text{t$\_$}]\text{:=}1+\frac{27}{\left(5-2 \text{Cos}[2 t]+2 \sqrt{3} \text{Sin}[2 t]\right)^2}}\)

\bigskip

\noindent\(\pmb{\text{c1}[\text{t$\_$}]\text{:=}}\\
\pmb{1+}\\
\pmb{\left(27 (1-8 \text{Cos}[2 t]+2 \text{Cos}[4 t]-4 \text{Cos}[6 t])^2 \left(5-2 \text{Cos}[2 t]+2 \sqrt{3} \text{Sin}[2 t]\right)^2 \right.}\\
\pmb{\left.\left.\left(-5+2 \text{Cos}[2 t]+2 \sqrt{3} \text{Sin}[2 t]\right)^2\right)\right/}\\
\pmb{\left((5+4 \text{Cos}[2 t]) (21-20 \text{Cos}[2 t]+8 \text{Cos}[4 t])^2 \right.}\\
\pmb{\left(-10+2 \text{Cos}[2 t]-5 \text{Cos}[4 t]+4 \text{Cos}[6 t]-2 \sqrt{3} \text{Sin}[2 t]-5 \sqrt{3} \text{Sin}[4 t]\right) }\\
\pmb{\left.\left(-10+2 \text{Cos}[2 t]-5 \text{Cos}[4 t]+4 \text{Cos}[6 t]+2 \sqrt{3} \text{Sin}[2 t]+5 \sqrt{3} \text{Sin}[4 t]\right)\right)}\)

\bigskip

\noindent\(\pmb{\text{c2}[\text{t$\_$}]\text{:=}}\\
\pmb{1+\left.\left(27 \left(2 \sqrt{3}+\sqrt{3} \text{Cos}[2 t]+\text{Sin}[2 t]+4 \text{Sin}[4 t]\right)^2\right)\right/}\\
\pmb{\left((5+4 \text{Cos}[2 t]) \left(-5+2 \text{Cos}[2 t]+2 \sqrt{3} \text{Sin}[2 t]\right) \right.}\\
\pmb{\left.\left(-10+2 \text{Cos}[2 t]-5 \text{Cos}[4 t]+4 \text{Cos}[6 t]-2 \sqrt{3} \text{Sin}[2 t]-5 \sqrt{3} \text{Sin}[4 t]\right)\right)}\)

\bigskip

\noindent\(\pmb{\text{c3}[\text{t$\_$}]\text{:=}}\\
\pmb{1-\left.\left(27 \left(-2 \sqrt{3}-\sqrt{3} \text{Cos}[2 t]+\text{Sin}[2 t]+4 \text{Sin}[4 t]\right)^2\right)\right/}\\
\pmb{\left((5+4 \text{Cos}[2 t]) \left(5-2 \text{Cos}[2 t]+2 \sqrt{3} \text{Sin}[2 t]\right) \right.}\\
\pmb{\left.\left(-10+2 \text{Cos}[2 t]-5 \text{Cos}[4 t]+4 \text{Cos}[6 t]+2 \sqrt{3} \text{Sin}[2 t]+5 \sqrt{3} \text{Sin}[4 t]\right)\right)}\)

\bigskip

\noindent\(\pmb{\text{b1}[\text{t$\_$}]\text{:=}-\frac{648 (2-10 \text{Cos}[2 t]+2 \text{Cos}[4 t]-5 \text{Cos}[6 t]+2 \text{Cos}[8 t]) \text{Sin}[2
t]}{(5+4 \text{Cos}[2 t])^2 (21-20 \text{Cos}[2 t]+8 \text{Cos}[4 t])^2 \sqrt{\frac{\text{Sin}[6 t]^2}{65+16 \text{Cos}[6 t]}}}}\)

\bigskip

\noindent\(\pmb{\text{b2}[\text{t$\_$}]\text{:=}}\\
\pmb{-\left(\left.\left(324 (1+2 \text{Cos}[4 t]) \text{Sin}[2 t] \left(5 \text{Cos}[2 t]-2 \text{Cos}[4 t]+\sqrt{3} (5 \text{Sin}[2 t]+2 \text{Sin}[4
t])\right)\right)\right/\right.}\\
\pmb{\left.\left((5+4 \text{Cos}[2 t])^2 (21-20 \text{Cos}[2 t]+8 \text{Cos}[4 t])^2 \sqrt{\frac{\text{Sin}[6 t]^2}{65+16 \text{Cos}[6 t]}}\right)\right)}\)

\bigskip

\noindent\(\pmb{\text{b3}[\text{t$\_$}]\text{:=}\left.\left(324 (1+2 \text{Cos}[4 t]) \text{Sin}[2 t] \left(-5 \text{Cos}[2 t]+2 \text{Cos}[4 t]+\sqrt{3}
(5 \text{Sin}[2 t]+2 \text{Sin}[4 t])\right)\right)\right/}\\
\pmb{\left((5+4 \text{Cos}[2 t])^2 (21-20 \text{Cos}[2 t]+8 \text{Cos}[4 t])^2 \sqrt{\frac{\text{Sin}[6 t]^2}{65+16 \text{Cos}[6 t]}}\right)}\)

\bigskip

\noindent\(\pmb{\text{mu}[\text{t$\_$}]\text{:=}-27 \sqrt{\frac{(1-4 \text{Cos}[2 t])^2}{(5+4 \text{Cos}[2 t])^3 (21-20 \text{Cos}[2 t]+8 \text{Cos}[4
t])}}-}\\
\pmb{\frac{648 (2-10 \text{Cos}[2 t]+2 \text{Cos}[4 t]-5 \text{Cos}[6 t]+2 \text{Cos}[8 t]) \text{Sin}[2 t]}{(5+4 \text{Cos}[2 t])^2 (21-20 \text{Cos}[2
t]+8 \text{Cos}[4 t])^2 \sqrt{\frac{\text{Sin}[6 t]^2}{65+16 \text{Cos}[6 t]}}}}\)

\bigskip

\noindent\(\pmb{\text{E}[\text{t$\_$}]\text{:=}\frac{(-1+2 \text{Cos}[2 t]) (1+2 \text{Cos}[2 t])^2}{21-20 \text{Cos}[2 t]+8 \text{Cos}[4 t]}}\)

\bigskip

\noindent\(\pmb{\text{F}[\text{t$\_$}]\text{:=}\frac{(1+2 \text{Cos}[2 t]) \sqrt{\frac{(5+4 \text{Cos}[2 t]) (1+2 \text{Cos}[4 t])^2 \text{Csc}[2
t]^2}{21-20 \text{Cos}[2 t]+8 \text{Cos}[4 t]}} \text{Sin}[2 t]}{5+4 \text{Cos}[2 t]}}\)

\begin{figure*}[t!]
    \centering
    \begin{subfigure}[t]{0.5\textwidth}
        \centering
        \includegraphics[height=1.2in]{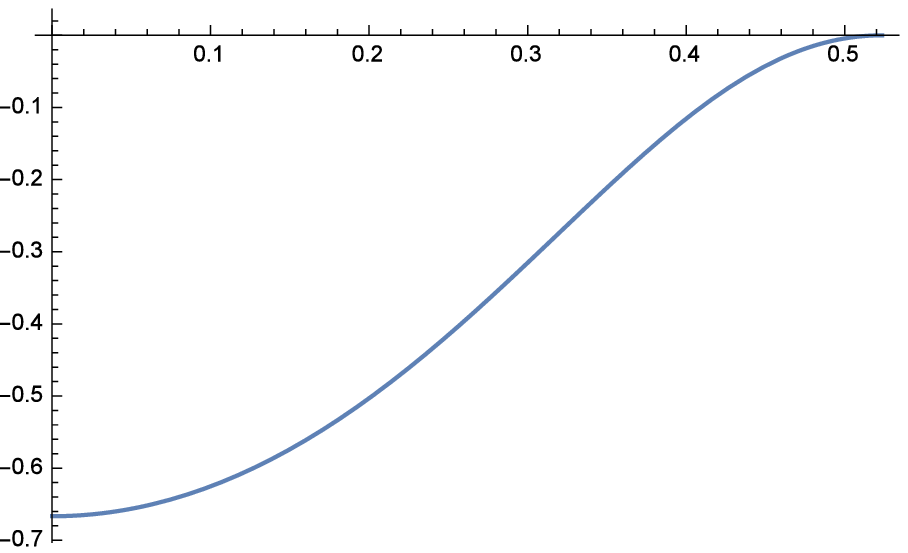}
        \caption{$C_1/\alpha_1$}
    \end{subfigure}%
    ~ 
    \begin{subfigure}[t]{0.5\textwidth}
        \centering
        \includegraphics[height=1.2in]{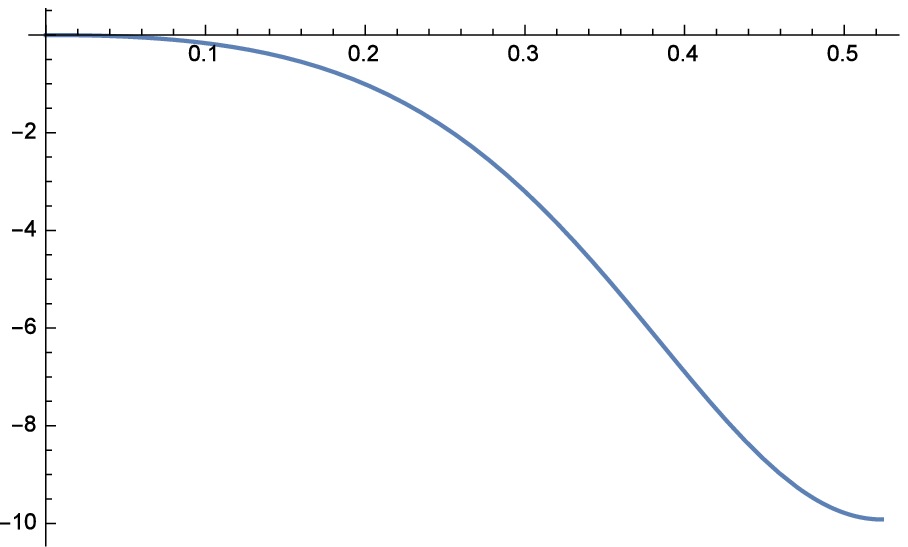}
        \caption{$C_2/\alpha_2$}
    \end{subfigure}
   \\
    \begin{subfigure}[t]{0.5\textwidth}
        \centering
        \includegraphics[height=1.2in]{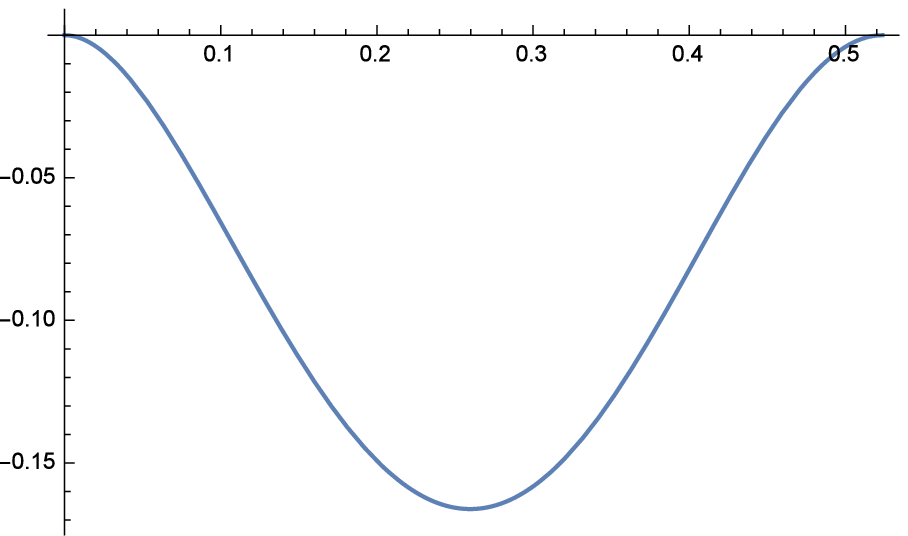}
        \caption{$C_3/\alpha_3$}
    \end{subfigure}
    \caption{Graphs of of...}\label{2}
\end{figure*}


\providecommand{\bysame}{\leavevmode\hbox to3em{\hrulefill}\thinspace}
\providecommand{\MR}{\relax\ifhmode\unskip\space\fi MR }
\providecommand{\MRhref}[2]{%
  \href{http://www.ams.org/mathscinet-getitem?mr=#1}{#2}
}
\providecommand{\href}[2]{#2}

\end{document}